\providecommand{\tabularnewline}{\\}
\theoremstyle{plain}
	    \newtheorem{thm}{\protect\theoremname}
      \newtheorem{thm}{\protect\theoremname}[chapter]
\theoremstyle{remark}
      \newtheorem{rem}{\protect\remarkname}
      \newtheorem{rem}{\protect\remarkname}[chapter]
\theoremstyle{definition}
      \newtheorem{defn}{\protect\definitionname}
      \newtheorem{defn}{\protect\definitionname}[chapter]
\providecommand{\theoremname}{Theorem}
\providecommand{\definitionname}{Definition}
\providecommand{\remarkname}{Remark}
\providecommand{\theoremname}{Theorem}
\begin{document}
\title{Nonlinear Model Reduction to Random Spectral Submanifolds in Random
Vibrations}
\author{Zhenwei Xu$^{1}$, Roshan S. Kaundinya$^{1}$, Shobhit Jain$^{2}$
and George Haller\thanks{Corresponding author. Email: georgehaller@ethz.ch}$^{1}$\\
 $^{1}$Institute for Mechanical Systems, ETH Z\"urich\\
 $^{2}$Delft Institute of Applied Mathematics, TU Delft \\
}
\maketitle
\begin{abstract}
Dynamical systems in engineering and physics are often subject to
irregular excitations that are best modeled as random. Monte Carlo
simulations are routinely performed on such random models to obtain
statistics on their long-term response. Such simulations, however,
are prohibitively expensive and time consuming for high-dimensional
nonlinear systems. Here we propose to decrease this numerical burden
significantly by reducing the full system to very low-dimensional,
attracting, random invariant manifolds in its phase space and performing
the Monte Carlo simulations on that reduced dynamical system. The
random spectral submanifolds (SSMs) we construct for this purpose
generalize the concept of SSMs from deterministic systems under uniformly
bounded random forcing. We illustrate the accuracy and speed of random
SSM reduction by computing the SSM-reduced power spectral density
of the randomly forced mechanical systems that range from simple oscillator
chains to finite-element models of beams and plates. 
\end{abstract}

\section{Introduction}

Excitation without temporally recurrent features is ubiquitous in
nature and hence many mechanical structures are subjected to it. Examples
include wind forcing on bridges, turbulent airflow over airplane wings
and seismic forces from earthquakes on buildings. This has been stimulating
significant interest in stochastic approaches to mechanics problems,
dating back to seminal contributions by \citet{caughey1962EquivalentLT}
and \citet{crandall_perturbation_1963} (see also \citet{crandall2014random},
\citet{caughey71}, \citet{wirsching06} and \citet{li2009stochastic}
for surveys).

Notable technical advances in the analysis of stochastically forced
systems are perturbative methods for higher-order statistics in weakly
nonlinear systems (see, e.g., \citet{caughey1962EquivalentLT} and
\citet{crandall_perturbation_1963}) and stochastic averaging for
systems with weak nonlinearity and weak damping (see \citet{roberts86}).
In these and other related methods, random vibrations have been modeled
either as random dynamical systems (ie., ordinary differential equations
appended with a white noise perturbation) or stochastic differential
equations (i.e., It\^o processes for the differential of the unknown
with a drift term and a Brownian motion term); see \citet{arnold03}
and \citet{duan15} for discussions of the technicalities involved
in both approaches. 

To a large extent, both of these modeling approaches are driven by
the convenience of having the Kolmogorov and the Fokker-Planck partial
differential equations (PDEs) at one's disposal for the first-order
statistics of the response. Both approaches, however, allow for the
occurrence of arbitrarily large forcing values with nonzero probability,
which is unrealistic in physical settings, as already pointed out
by \citet{caughey71}. A similar situation arises in other fields
with prominent randomness, such as financial modeling, in which stochastic
ODEs with Brownian motion are convenient but do not quite reproduce
practically observed financial time series (see, e.g., \citet{GHAOUI02}).
Even from a purely mathematical perspective, showing the existence
of the random equivalents of deterministic structures (such as invariant
manifolds) in the phase space under arbitrarily large perturbations
is a futile undertaking. Indeed, even simple structurally stable stable
dynamical features, such as asymptotically stable fixed points or
attracting limit cycles, will be destroyed by unbounded perturbations.

For these reasons, we consider here the physically more realistic
case of external random forcing whose realizations are uniformly bounded
in time. Under this assumption, one may reasonably seek random extensions
of dynamical structures that are known to enable mathematically rigorous
model reduction in deterministic vibratory systems. Such structures
are known in deterministic vibrations since the seminal work of \citet{shaw93}
on nonlinear normal modes (NNMs), defined as invariant manifolds tangent
to modal subspaces of mechanical systems (see also \citet{kerschen09},
\citet{mikhlin11} and \citet{mikhlin23} for reviews). 

More recent work by \citet{haller16} and \citet{haller23} revealed
that the envisioned deterministic NNMs only exist under specific nondegeneracy
and nonresonance conditions. Even under those conditions, NNMs are
never unique. Rather, a continuous family of spectral submanifolds
(SSMs) exists that are all tangent to any selected span of nonresonant
spectral subspaces. The smoothest such SSM admits a regular Taylor
expansion up to the degree of the dynamical system, whereas the remaining
secondary (or fractional) SSMs only admit fractional-powered expressions.
The general theory of SSMs has also been extended to cover fixed points
with instabilities (\citet{haller23}) and systems subject to uniformly
bounded temporally aperiodic forcing, including chaotic forcing (see
\citet{haller24}). 

These results on generalized deterministic SSMs have been implemented
in a continually expanding open-source MATLAB code, \textit{SSMTool,}
with a growing collection of nonlinear examples (see \citet{jain23}
for the latest version). The package can also handle parametric resonance
(see \citet{thurnher2023}) and algebraic constraints (see \citet{li22c}).
Matlab- and Python-based open source packages, \emph{SSMLearn} and
\emph{fastSSM}, for a purely data-driven extraction of SSMs have also
become available (see \citet{cenedese22a} and \citet{axas23}), broadening
the applicability of SSM-based model reduction to diverse fields,
such as control of soft robots (see \citet{alora23}), transition
problems in fluids (see \citet{Kaszas_Haller_2024}) and modeling
of fluid-structure interactions from videos (see \citet{xu24}). 

The idea of extending the Shaw--Pierre NNM concept to random vibrations
is already implicit in the work of \citet{worden17}. Motivated by
the full decoupling of linear systems in a modal basis, these authors
propose an alternative definition of deterministic NNMs as $n$ statistically
independent nonlinear mappings from a $2n$-dimensional phase space
to two-dimensional (2D) spaces. \citet{worden17} assume polynomial
expansions for these NNM mappings and seek to determine the coefficients
in these expansions from the response of the system under stochastic
forcing. Specifically, they obtain the polynomial coefficients by
minimizing the cross-correlations of envisioned mappings via regression.

Targeting deterministic NNMs via stochastic forcing, \citet{worden17}
implicitly assume that (a) an appropriately modified notion of an
invariant manifold continues to exist under stochastic forcing (b)
this manifold is in fact the same as the unforced manifold, irrespective
of the forcing level. While (a) cannot be guaranteed for the unbounded
noise employed in their approach and (b) is generally not the case,
the idea is clearly a novel and important step in exploring the fate
of NNMs under general forcing and in a data-driven setting. A follow-up
paper by \citet{Tsialiamanis22} on the same idea is critical of the
first implementation of the idea and uses instead generative adversarial
networks to learn the NNM mappings, still under stochastic forcing.
More recent work by \citet{simpson21} explores this idea further
by employing long-short term neural networks in the procedure. The
dimension of data-driven applications in these advances remains in
the tens of degrees of freedom. No explicit reduced-models are obtained
in the end, as the approach seeks only the mapping from the data to
reduced coordinates and back. 

In contrast, SSMLearn has been successfully employed to extract predictive
deterministic SSM-reduced models from experimental measurements (see
\citet{cenedese22b}), experimental videos of continua (of infinite
degrees of freedom, see \citet{cenedese22a} and \citet{yang24})
and numerical data from finite-element codes exceeding a million degrees
of freedom (see \citet{cenedese24}). For this reason, we do not seek
here a further improvement of data-driven deterministic SSM extraction.
Rather, we focus on what the above cited statistical approaches implicitly
assume: the existence of random SSMs in stochastically forced deterministic
systems. We show that a combination of prior results on a class of
random invariant manifolds and recent results on deterministic invariant
manifolds can be used to infer the existence of random SSMs and their
reduced dynamics with full mathematical rigor.

This paper is organized as follows. We first prove the existence of
random spectral submanifolds (random SSMs) for randomly forced dynamical
systems. These SSMs turn out to exist under smoothness and nonresonance
conditions on the deterministic unforced system and under a uniform
boundedness assumption on the random forcing. We also derive leading-order
formulas for computing reduced models on random SSMs. We then demonstrate
the performance of random SSM reduction in four examples: a suspension
system also known as a quarter-car model moving on an irregular road,
a building model subject to earthquakes, a von-Kármán beam undergoing
random base excitation and a rectangular von-Kármán plate placed in
a windy medium. We conclude by summarizing our contributions and discussing
future work.

\section{SSMs in randomly forced systems}

\subsection{Setup}

We consider small random perturbations of a deterministic set of first-order
nonlinear ordinary differential equations (ODEs) with an attracting
fixed point. The randomness of the process is introduced via the dependence
of the random perturbation on elements $\boldsymbol{\nu}$ of a probability
space $\mathcal{V}$, which is endowed with a probability measure
$\mathbb{P}$ and a corresponding sigma algebra $\mathcal{F}$. The
evolving randomness along trajectories will be induced by a metric
dynamical system $\boldsymbol{\theta}^{t}\colon\mathcal{V}\to\mathcal{V}$
that creates a time-dependent random variable $\boldsymbol{\theta}^{t}\left(\boldsymbol{\nu}\right)$
on which the random differential equation depends in a smooth and
uniformly bounded fashion.\\

This smooth dependence will guarantee uniform boundedness for the
noise in time, which is a realistic assumption for applications in
mechanics. In contrast, randomness modeled by classic stochastic differential
equations (or It\^o processes) can display arbitrarily large changes
in the noise over short periods of time. Such unbounded perturbations
would be unrealistic in our context of real-life mechanical systems
subject to noisy excitation. This was already pointed out in early
work on random vibrations by \citet{caughey71}, who observed that
unbounded noise is a convenient assumption that is unsupported by
physics but yields simpler statistics. We do not suggest that this
convenience assumption leads to fundamentally incorrect results; we
simply choose not to make it. We refer to \citet{arnold03} and \citet{duan15}
as general texts on random dynamical systems and stochastic differential
equations. \\

Based on these preliminaries, we consider a random differential equation
\begin{equation}
\dot{\mathbf{x}}=\mathbf{A}\mathbf{x}+\mathbf{f}_{0}\left(\mathbf{x}\right)+\epsilon\mathbf{g}\left(\mathbf{x},\boldsymbol{\theta}^{t}\left(\boldsymbol{\nu}\right)\right),\quad\mathbf{x}\in D\subset\mathbb{R}^{n},\quad0\leq\epsilon\ll1,\quad\mathbf{f}_{0}(\mathbf{x})=\mathcal{O}\left(\left|\mathbf{x}\right|^{2}\right),\label{eq:random ODE}
\end{equation}
on the probability space $\left(\mathcal{V},\mathcal{F},\mathbb{P}\right)$,
where $\mathbf{f}_{0}\colon D\subset\mathbb{R}^{n}\to\mathbb{R}^{n}$
is of smoothness class $C^{r}(D)$ with $r\geq2$. The uniformly bounded
function $\mathbf{g}$ is at least of class $C^{1}$ in \textbf{$\mathbf{x}\in D$},
$C^{0}$ in $t$ for fixed $\boldsymbol{\nu}$, and measurable in
$\boldsymbol{\nu}$. The domain $D$ is assumed compact and forward
invariant for $\epsilon=0$. An example of such a random ODE would
be a stochastic ODE that depends on a Brownian motion confined by
reflecting boundaries. Another practical example would be a non-autonomous
ODE whose time dependence is generated on the fly at discrete time
instances via a random variable with values in the $[0,1]$ interval.

\subsection{The unforced system ($\epsilon=0$)\label{subsec:The-unforced-system}}

We assume that $\mathbf{x}=0$ is a stable hyperbolic fixed point
for eq. (\ref{eq:random ODE}) and the set $D$ lies fully in the
domain of attraction of this fixed point. The linearized ODE at the
fixed point is then 
\begin{equation}
\dot{\mathbf{x}}=\mathbf{A}\mathbf{x},\label{eq: linearized system}
\end{equation}
and the eigenvalues $\left\{ \lambda_{j}\right\} _{j=1}^{n}$ of $\mathbf{A}$
can be ordered so that 
\begin{equation}
\mathrm{Re}\lambda_{n}\leq\mathrm{Re}\lambda_{n-1}\leq\ldots\leq\mathrm{Re}\lambda_{1}<0\label{eq:stable sectrum}
\end{equation}
holds. We assume, for simplicity, that $\mathbf{A}$ is diagonalizable,
in which case there are $n$ real eigenspaces $E_{1},$ $E_{2},\ldots,E_{n}$
corresponding to the eigenvalues listed in (\ref{eq:stable sectrum}).
Each eigenspace corresponding to a real eigenvalue is one- dimensional
and those corresponding to a complex conjugated pair are two-dimensional.
\\

We consider a \emph{slow spectral subspace} 
\begin{equation}
E=E_{1}\varoplus\ldots\varoplus E_{s},\quad\dim E=d,\label{eq:slow-subspace}
\end{equation}
spanned by the first $s$ eigenspaces of $\mathbf{A}$. If all the
eigenspaces correspond to single real eigenvalues, then $d=s$. If
all the eigenspaces correspond to single complex conjugate pairs,
then $d=2s$.

By construction, $E$ is a $d$-dimensional, attracting invariant
subspace of the linearized ODE (\ref{eq: linearized system}) that
is spanned by the $d$ slowest decaying solution families of this
ODE. After a possible linear change of coordinates, we may assume
that 
\begin{equation}
E=\left\{ \mathbf{x}\in\mathbb{R}^{n}\colon\,\mathbf{x}_{d+1}=\ldots=\mathbf{x}_{n}=0\right\} ,\quad\mathbf{A}=\left(\begin{array}{cc}
\mathbf{A}_{E} & 0\\
0 & \mathbf{B}
\end{array}\right),\label{eq:block_diagonal}
\end{equation}
with $\mathbf{A}_{E}\in\mathbb{R}^{d\times d}$ and $\mathbf{B}\in\mathbb{R}^{(n-d)\times(n-d)}$
. Restriction of the full linear ODE to $E$ gives an exact, $d$-dimensional
reduced-order model with which all solutions of (\ref{eq: linearized system})
synchronize once the fastest $n-d$ solution components of the system
have died out. Specifically, in appropriate coordinates 
\[
\mathbf{x}=\left(\boldsymbol{\xi},\boldsymbol{\eta}\right)\in\mathbb{R}^{d}\times\mathbb{R}^{\left(n-d\right)}
\]
that block-diagonalize $\mathbf{A}$, the $d$-dimensional reduced
linear dynamics on $E$ satisfies 
\[
\dot{\boldsymbol{\xi}}=\mathbf{A}_{E}\boldsymbol{\xi},\quad\mathrm{spect}\mathbf{A}_{E}=\left\{ \lambda_{1},\ldots,\lambda_{d}\right\} .
\]
An important question is whether such a model reduction procedure
could also be justified for the full nonlinear, random ODE (\ref{eq:random ODE}).

As a first step in answering this question, we define the \emph{spectral
quotient} of $E$ as the positive integer 
\[
\sigma\left(E\right)=\Biggl\lfloor\frac{\mathrm{Re}\lambda_{n}}{\mathrm{Re}\lambda_{1}}\Biggr\rfloor,
\]
with the half brackets referring to the integer part of a positive
real number. The slow spectral subspace $E$ is called \emph{nonresonant}
whenever the spectrum of the operator $\mathbf{A}_{E}$ has no low-order
resonance relationship with the spectrum of the operator $\mathbf{B}$.
More specifically, $E$ is nonresonant if 
\begin{equation}
\sum_{j=1}^{d}m_{j}\lambda_{j}\neq\lambda_{k},\quad2\leq\sum_{j=1}^{d}m_{j}\leq\sigma\left(E\right),\quad k>d.\label{eq:nonresonance}
\end{equation}

\citet{haller16}\emph{ }define a\emph{ spectral submanifold} $\mathcal{W}_{0}(E)$
of the deterministic ($\epsilon=0$) limit of system (\ref{eq:random ODE})
as the smoothest forward-invariant manifold that is tangent to the
spectral subspace $E$ at $x=0$ and has the same dimension as $E$.
Using results by \citet{cabre03}, \citet{haller16} deduce that such
a manifold is well-defined and of class $C^{r}$ as long as $r>\sigma\left(E\right)$
and the conditions (\ref{eq:stable sectrum}) and (\ref{eq:nonresonance})
are satisfied. In other words, $\mathcal{W}_{0}(E)$ exists and provides
a unique, class $C^{r}$, nonlinear continuation of the invariant
spectral subspace $E$ in the deterministic limit of the random ODE
(\ref{eq:random ODE}). Near the origin, $\mathcal{W}_{0}(E)$ can
therefore be written as a graph 
\begin{equation}
\boldsymbol{\eta}=\mathbf{h}_{0}(\boldsymbol{\xi}),\label{eq:h_0}
\end{equation}
for some function $\mathbf{h}_{0}\in C^{r}$. With the notation 
\[
\mathbf{f}_{0}(\mathbf{x})=\left(\mathbf{f}_{0\boldsymbol{\xi}}\left(\boldsymbol{\xi},\boldsymbol{\eta}\right),\mathbf{f}_{0\boldsymbol{\eta}}\left(\boldsymbol{\xi},\boldsymbol{\eta}\right)\right),
\]
the reduced dynamics on $\mathcal{W}_{0}(E)$ satisfies the reduced
ODE 
\begin{equation}
\dot{\boldsymbol{\xi}}=\mathbf{f}_{0\boldsymbol{\xi}}\left(\boldsymbol{\xi},\mathbf{h}_{0}(\boldsymbol{\xi})\right),\label{eq: deterministic reduced ODE}
\end{equation}
which is an exact reduced-order model for system (\ref{eq:random ODE})
for $\epsilon=0$ in a neighborhood of the origin. We show the geometry
of the subspace $E$ and the deterministic spectral submanifold $\mathcal{W}_{0}(E)$
in Fig. \ref{fig:Geometry of SSM}. 
\begin{figure}
\centering{}\includegraphics[width=0.5\textwidth]{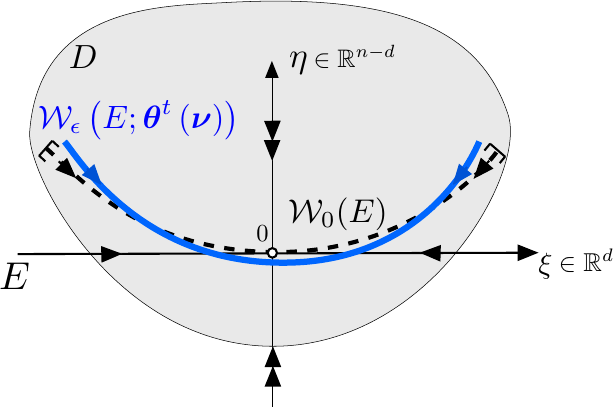}\caption{The geometry of the slow spectral subspace $E$, the deterministic
spectral submanifold $\mathcal{W}_{0}(E)$ and the random invariant
manifold $\mathcal{W}_{\epsilon}\left(E;\boldsymbol{\theta}^{t}\left(\boldsymbol{\nu}\right)\right)$.}
\label{fig:Geometry of SSM} 
\end{figure}

Of interest to us here is the continuation of $\mathcal{W}_{0}(E)$
and its reduced dynamics for $\epsilon>0$ in the full random ODE
(\ref{eq:random ODE}). To this end, we define the \emph{spectral
gap} $\rho\left(E\right)$ associated with $E$ as the integer part
of the ratio of the slowest decay rate outside $E$ to the fastest
decay rate inside $E$: 
\begin{equation}
\rho\left(E\right)=\Biggl\lfloor\frac{\mathrm{Re}\,\lambda_{d+1}}{\mathrm{Re}\,\lambda_{d}}\Biggr\rfloor.\label{eq:spectral-gap}
\end{equation}
Note that $1<\rho(E)\leq\sigma\left(E\right)$ by definition. If needed,
we reduce the size of the domain $D$ to make sure that all trajectories
in $\mathcal{W}(E)$ converge to the fixed point at the origin. In
that case, SSM-normal compression rates along each trajectory in $\mathcal{W}(E)$
will be at least $\rho\left(E\right)$-times stronger than the SSM-tangential
compression rates as $t\to\infty$. In other words, $W(E)$ is a $\rho\left(E\right)$-normally
hyperbolic invariant manifold by the classic definition of \citet{fenichel71}.
After possibly modifying the boundary of $\mathcal{W}(E)$, we can
also assume that $\mathcal{W}(E)$ is an inflowing invariant manifold,
i.e., all nonzero trajectories cross the boundary of $\mathcal{W}(E)$
strictly inwards.

\subsection{The forced system ($\epsilon>0$)}

With the help of the noise model $\boldsymbol{\theta}^{t}$ on the
probability space $\mathcal{V}$ (see eq. (\ref{eq:random ODE})),
the solution operator 
\begin{equation}
\mathbf{F}_{\epsilon}^{t}\colon X\times\mathcal{V}\to X,\qquad t\in\mathbb{R},\label{eq:randon flow map definiton}
\end{equation}
of the random ODE (\ref{eq:random ODE}) defines a \emph{random dynamical
system} (or \emph{random flow map)} $\mathbf{F}_{\epsilon}^{t}\colon D\to\mathbb{R}^{n}$
that is measurable in $t$ and over $\mathcal{V}\times X$, and satisfies
the \emph{cocycle condition} 
\begin{align}
\mathbf{F}_{\epsilon}^{0}(\mathbf{x};\boldsymbol{\nu}) & =\mathbf{x},\nonumber \\
\mathbf{F}_{\epsilon}^{t+s}(\mathbf{x};\boldsymbol{\nu}) & =\mathbf{F}_{\epsilon}^{s}\left(\mathbf{F}_{\epsilon}^{t}\left(\mathbf{x};\boldsymbol{\nu}\right);\boldsymbol{\theta}^{t}\left(\boldsymbol{\nu}\right)\right).\label{eq:cocycle condition}
\end{align}
In addition, such a random dynamical system is said to be \emph{class
$C^{r}$ smooth} if the mapping $\mathbf{F}_{\epsilon}^{t}(\,\cdot\,;\boldsymbol{\nu})$
is of class $C^{r}$ for all $\left(t,\boldsymbol{\nu}\right)$.

The second condition in (\ref{eq:cocycle condition}) ensures that
the evolution along random trajectories satisfies an analogue of the
group property of deterministic flows. Specifically, stitching together
subsequent segments of a full random trajectory should result in the
same final position for the piecewise stitched trajectory and for
the full trajectory, even though each segment of the stitched trajectory
is initialized under a different element of $\mathcal{V}$ chosen
at the start time of the segment. The metric dynamical system $\boldsymbol{\theta}^{t}$
ensures that these elements of $\mathcal{V}$ are properly (and deterministically)
related to each other along each random trajectory. In modeling the
noise in eq. (\ref{eq:random ODE}), the probability space $\mathcal{V}$
is commonly selected as the space of continuous paths $\left\{ \boldsymbol{\nu}(s)\right\} _{s\geq0}$
with $\boldsymbol{\nu}(0)=0.$ In that case, $\boldsymbol{\theta}^{t}\boldsymbol{\nu}\left(\,\cdot\,\right)=\boldsymbol{\nu}\left(t+\cdot\,\right)-\boldsymbol{\nu}(t)$
is called the Wiener shift under which the probability measure $\mathbb{P}$
is invariant and ergodic (see \citet{arnold03} or \citet{duan15}).

A \emph{random set $\mathcal{M}\left(\boldsymbol{\nu}\right)\subset D$
}is a family of nonempty closed sets such that 
\[
\boldsymbol{\nu}\mapsto\inf_{\mathbf{y}\in\mathcal{M}\left(\boldsymbol{\nu}\right)}\left\Vert \mathbf{x}-\mathbf{y}\right\Vert 
\]
is a random variable for any $\mathbf{x}\in D$. Such a random set
is a \emph{random manifold} if each $\mathcal{M}\left(\boldsymbol{\nu}\right)$
is a manifold. A random manifold is a \emph{random invariant manifold}
for the random flow map for system (\ref{eq:random ODE}) if 
\[
\mathbf{F}_{\epsilon}^{t}\left(\mathcal{M}\left(\boldsymbol{\nu}\right);\boldsymbol{\nu}\right)=\mathcal{M}\left(\boldsymbol{\theta}^{t}\left(\boldsymbol{\nu}\right)\right)
\]
holds for all $t\in\mathbb{R}$ and $\boldsymbol{\nu}\in\mathcal{V}$.
A random manifold $\bar{\mathcal{M}}\left(\boldsymbol{\nu}\right)=\mathcal{M}\left(\boldsymbol{\nu}\right)\cup\partial\mathcal{M}\left(\boldsymbol{\nu}\right)$
with boundary $\partial\mathcal{M}\left(\boldsymbol{\nu}\right)$
is called a \emph{random} \emph{inflowing invariant manifold} (with
boundary) if 
\[
\mathbf{F}_{\epsilon}^{t}\left(\mathcal{M}\left(\boldsymbol{\nu}\right);\boldsymbol{\nu}\right)\supset\bar{\mathcal{M}}\left(\boldsymbol{\theta}^{t}\left(\boldsymbol{\nu}\right)\right)
\]
for all $t<0$ and all $\boldsymbol{\nu}\in\mathcal{V}.$ Similarly,
a random manifold $\bar{\mathcal{M}}\left(\boldsymbol{\nu}\right)$
is a \emph{random} \emph{overflowing invariant manifold} (with boundary)
if 
\[
\mathbf{F}_{\epsilon}^{t}\left(\mathcal{M}\left(\boldsymbol{\nu}\right);\boldsymbol{\nu}\right)\supset\bar{\mathcal{M}}\left(\boldsymbol{\theta}^{t}\left(\boldsymbol{\nu}\right)\right)
\]
for all $t>0$ and all $\boldsymbol{\nu}\in\mathcal{V}.$

In Appendix A, we prove the following main theorem on the existence
of random SSMs by combining results of \citet{li13} and \citet{eldering18}
in our specific setting. 
\begin{thm}
\label{thm:main}Assume that 
\[
r\geq\rho\left(E\right).
\]
Then, for $\epsilon>0$ small enough:

(i) The random ODE (\ref{eq:random ODE}) has a class-$C^{\rho\left(E\right)}$,
random inflowing-invariant manifold $\mathcal{W}_{\epsilon}(E;\boldsymbol{\theta}^{t}\left(\boldsymbol{\nu}\right))$,
which can locally be written as 
\[
\mathcal{W}_{\epsilon}(E;\boldsymbol{\theta}^{t}\left(\boldsymbol{\nu}\right))=\left\{ \mathbf{x}=\left(\boldsymbol{\xi},\boldsymbol{\eta}\right)\in D\colon\,\boldsymbol{\eta}=\mathbf{h}_{\epsilon}\left(\boldsymbol{\xi};\boldsymbol{\theta}^{t}\left(\boldsymbol{\nu}\right)\right)=\mathbf{h}_{0}(\boldsymbol{\xi})+\epsilon\mathbf{h}_{1}\left(\boldsymbol{\xi};\boldsymbol{\theta}^{t}(\boldsymbol{\nu}),\epsilon\right)\right\} ,
\]
where $\mathbf{h}_{1}\left(\boldsymbol{\xi};\boldsymbol{\theta}^{t}(\boldsymbol{\nu}),\epsilon\right)$
is measurable in $\boldsymbol{\nu}$, and $C^{\rho\left(E\right)}$
smooth in $\boldsymbol{\xi}$ and $\epsilon$. Therefore, $\mathcal{W}_{\epsilon}\left(E;\boldsymbol{\theta}^{t}\left(\boldsymbol{\nu}\right)\right)$
is $\mathcal{O}\left(\epsilon\right)$ $C^{1}$-close to $\mathcal{W}(E)$
in $D.$

(ii) $\mathcal{W}_{\epsilon}\left(E;\boldsymbol{\theta}^{t}\left(\boldsymbol{\nu}\right)\right)$
is $C^{\rho\left(E\right)}$-diffeomorphic to $\mathcal{W}(E)$ inside
the domain $D$ for all $\boldsymbol{\nu}\in\mathcal{V}$.

(iii) $\mathcal{W}_{\epsilon}\left(E;\boldsymbol{\theta}^{t}\left(\boldsymbol{\nu}\right)\right)$
attracts any solutions of (\ref{eq:random ODE}) starting inside $D$
with probability $\mathbb{P}=1.$

(iv) $\mathcal{W}_{\epsilon}(E;\boldsymbol{\theta}^{t}(\boldsymbol{\nu}))$
can be locally written as 
\[
\mathcal{W}_{\epsilon}(E;\boldsymbol{\theta}^{t}(\boldsymbol{\nu}))=\left\{ \mathbf{x}=\left(\boldsymbol{\xi},\boldsymbol{\eta}\right)\in D\colon\,\boldsymbol{\eta}=\mathbf{h}_{\epsilon}(\boldsymbol{\xi};\boldsymbol{\theta}^{t}(\boldsymbol{\nu}))=\mathbf{h}_{0}(\boldsymbol{\xi})+\epsilon\mathbf{h}_{1}\left(\boldsymbol{\xi};\boldsymbol{\theta}^{t}(\boldsymbol{\nu}),\epsilon\right)\right\} .
\]

(v) The dynamics restricted to $\mathcal{W}_{\epsilon}(E;\boldsymbol{\theta}^{t}(\boldsymbol{\nu}))$
satisfies the reduced random ODE 
\begin{align}
\dot{\boldsymbol{\xi}} & =\mathbf{f}_{0\boldsymbol{\xi}}\left(\boldsymbol{\xi},\mathbf{h}_{\epsilon}\left(\boldsymbol{\xi};\boldsymbol{\theta}^{t}\left(\boldsymbol{\nu}\right)\right)\right)+\epsilon\mathbf{g}_{\boldsymbol{\xi}}\left(\boldsymbol{\xi},\mathbf{h}_{\epsilon}\left(\boldsymbol{\xi};\boldsymbol{\theta}^{t}\left(\boldsymbol{\nu}\right)\right),\boldsymbol{\theta}^{t}\left(\boldsymbol{\nu}\right)\right)\nonumber \\
 & =\mathbf{f}_{0\boldsymbol{\xi}}\left(\boldsymbol{\xi},\mathbf{h}_{0}\left(\boldsymbol{\xi}\right)\right)+\epsilon\left[D_{\boldsymbol{\eta}}\mathbf{f}_{0\boldsymbol{\xi}}\left(\boldsymbol{\xi},\mathbf{h}_{0}\left(\boldsymbol{\xi}\right)\right)+\mathbf{g}_{\boldsymbol{\xi}}\left(\boldsymbol{\xi},\mathbf{h}_{0}\left(\boldsymbol{\xi}\right),\boldsymbol{\theta}^{t}\left(\boldsymbol{\nu}\right)\right)\right]+\mathcal{O}\left(\epsilon^{2}\right).\label{eq:random reduced ODE}
\end{align}
\end{thm}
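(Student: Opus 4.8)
The plan is to obtain Theorem~\ref{thm:main} by reducing the random ODE \eqref{eq:random ODE} to a form covered by the persistence theorems for normally hyperbolic random invariant manifolds of \citet{li13} and their refinement (with boundary, smooth dependence on parameters) by \citet{eldering18}. First I would set up the unperturbed object: by Section~\ref{subsec:The-unforced-system}, after shrinking $D$, the deterministic SSM $\mathcal{W}(E)$ is a compact, $C^r$, $\rho(E)$-normally hyperbolic inflowing invariant manifold with boundary for the $\epsilon=0$ flow, carrying the graph representation $\boldsymbol{\eta}=\mathbf{h}_0(\boldsymbol{\xi})$ and reduced dynamics \eqref{eq: deterministic reduced ODE}. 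I would then check that the $\epsilon$-term $\epsilon\,\mathbf{g}(\mathbf{x},\boldsymbol{\theta}^t(\boldsymbol{\nu}))$ is an admissible random perturbation in the sense required by those theorems: uniformly bounded in $t$ along every realization, $C^1$ in $\mathbf{x}$, measurable in $\boldsymbol{\nu}$, and compatible with the cocycle structure generated by the metric dynamical system $\boldsymbol{\theta}^t$. The uniform boundedness hypothesis on $\mathbf{g}$ is exactly what makes the perturbed vector field admit a uniform $C^1$ bound, which is the key quantitative input for the persistence estimates; this is where the ``uniformly bounded noise'' assumption earns its keep, in contrast with It\^o-type forcing.

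Next I would invoke the random normally hyperbolic persistence theorem. Because the spectral-gap exponent is $\rho(E)$ and $r\ge\rho(E)$, the theorem yields, for all sufficiently small $\epsilon>0$, a random invariant manifold $\mathcal{W}_\epsilon(E;\boldsymbol{\theta}^t(\boldsymbol{\nu}))$ that is $C^{\rho(E)}$ in the base point, measurable in $\boldsymbol{\nu}$, $\mathcal{O}(\epsilon)$-$C^1$-close to $\mathcal{W}(E)$, and diffeomorphic to it; this gives parts (i), (ii), (iv). The inflowing property in the random sense (the stated inclusion for $t<0$) follows from the inflowing property of $\mathcal{W}(E)$ together with $C^1$-closeness, since a strictly-inward transversal crossing of the boundary is an open condition preserved under small perturbations of the vector field. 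For (iii), I would use the exponential tracking / asymptotic-completeness part of the persistence theory: the random stable foliation over $\mathcal{W}_\epsilon$ fills $D$ (again using that $D$ lies in the domain of attraction of the origin for $\epsilon=0$ and closeness for small $\epsilon$), so every trajectory starting in $D$ is attracted to the manifold; the measure-one statement comes from the fact that these estimates hold for $\mathbb{P}$-almost every $\boldsymbol{\nu}$, the exceptional set being the null set on which $\boldsymbol{\theta}^t$ or the realization of $\mathbf{g}$ misbehaves.

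For (v), the reduced equation is just the restriction of \eqref{eq:random ODE} to the graph $\boldsymbol{\eta}=\mathbf{h}_\epsilon(\boldsymbol{\xi};\boldsymbol{\theta}^t(\boldsymbol{\nu}))$: substituting the graph into the $\boldsymbol{\xi}$-component of the vector field gives the first line of \eqref{eq:random reduced ODE}. To get the second (leading-order) line I would write $\mathbf{h}_\epsilon=\mathbf{h}_0+\epsilon\mathbf{h}_1+\mathcal{O}(\epsilon^2)$, Taylor-expand $\mathbf{f}_{0\boldsymbol{\xi}}(\boldsymbol{\xi},\cdot)$ about $\mathbf{h}_0(\boldsymbol{\xi})$, and expand $\mathbf{g}_{\boldsymbol{\xi}}$ to zeroth order in $\epsilon$; the $\mathcal{O}(\epsilon^2)$ remainder is uniform because all the derivatives involved are uniformly bounded on the compact $D$ and the $\mathbf{h}$-expansion is $C^1$ in $\epsilon$ by part (i). I expect the main obstacle to be bookkeeping rather than conceptual: one has to verify carefully that the hypotheses of \citet{li13} and \citet{eldering18}, which are stated for specific classes of random/nonautonomous systems and for inflowing manifolds with boundary, are met verbatim here — in particular that the $\rho(E)$-normal hyperbolicity rates established pointwise along trajectories in $\mathcal{W}(E)$ translate into the uniform spectral-gap condition those theorems require, and that ``inflowing with boundary'' is handled without the usual overflowing-invariant technical detours. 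Everything else (the Taylor expansions, the closeness estimates) is routine once the abstract theorem is in place.
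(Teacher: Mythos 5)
There is a genuine gap, and it sits exactly at the point you dismiss as ``bookkeeping rather than conceptual.'' Your plan is to feed $\mathcal{W}_0(E)$ --- a compact, normally \emph{attracting}, \emph{inflowing}-invariant manifold with boundary --- directly into the random persistence theorems of \citet{li13}, trusting that the ``inflowing with boundary'' case is covered. It is not. The boundary results of \citet{li13} (and the classical Fenichel-type theory they extend) apply only to normally attracting \emph{overflowing}-invariant manifolds or normally repelling \emph{inflowing}-invariant manifolds. The reason is structural, not a matter of checking hypotheses verbatim: the graph transform must be a contraction on a space of graphs defined over the full unperturbed domain, and when a normally attracting inflowing manifold is advected forward, candidate graphs shrink off their initial domain (and backward advection is ruled out by the attraction). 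So the theorem you want to ``invoke'' simply does not exist in the form your argument requires, and no amount of verifying rates or spectral gaps repairs this.

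The paper's proof resolves this with an extra construction you are missing: Proposition B1 of \citet{eldering18} (the ``wormhole'' construct) is used to extend $\mathcal{W}_0(E)$ smoothly into a compact, \emph{boundaryless}, $\rho(E)$-normally hyperbolic, normally attracting invariant manifold $\hat{\mathcal{M}}_0$; the boundaryless persistence theorem of \citet{li13} is then applied to $\hat{\mathcal{M}}_0$, and the random SSM is recovered as the subset of the perturbed $\hat{\mathcal{M}}_\epsilon(\boldsymbol{\nu})$ that is a graph over $\mathcal{W}_0(E)$. (You cite \citet{eldering18} but mischaracterize it as a ``refinement with boundary'' of the persistence theorem, rather than as the source of the boundaryless extension.) This detour has a consequence your proposal also misses: the extension is non-unique, so the persisting random SSM is only unique up to the order of its Taylor expansion, not as a manifold. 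Two smaller points: the upgrade from $\mathcal{O}(\epsilon)$ $C^1$-closeness to $C^{\rho(E)}$-smooth dependence on $\epsilon$ in part (i) is obtained by appending $\epsilon$ as a dummy variable, which you should state rather than assume; your derivation of (v) by substitution and Taylor expansion is fine and matches what the paper leaves implicit.
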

\begin{proof}
See Appendix A. 
\end{proof}
We show schematically the random inflowing-invariant manifold $\mathcal{W}_{\epsilon}(E;\boldsymbol{{\theta}}^{t}(\boldsymbol{\nu}))$
for one particular $\boldsymbol{\nu}\in\mathcal{V}$ in Fig. \ref{fig:Geometry of SSM}.
By definition, as any random invariant manifold, $\mathcal{W}_{\epsilon}(E;\boldsymbol{{\theta}}^{t}(\boldsymbol{\nu}))$
is time-dependent in any realization of the random ODE (\ref{eq:random ODE})
and satisfies 
\[
\mathbf{F}_{\epsilon}^{t}\left(\mathcal{W}_{\epsilon}(E;\boldsymbol{\nu});\boldsymbol{\nu}\right)\supset\mathcal{W}_{\epsilon}\left(E;\boldsymbol{\theta}^{t}\left(\boldsymbol{\nu}\right)\right)
\]
for all $t\geq0$.

By statement (iv) of Theorem \ref{thm:main}, model reduction to the
persisting random invariant manifold $\mathcal{W}_{\epsilon}\left(E;\boldsymbol{\theta}^{t}\left(\boldsymbol{\nu}\right)\right)$
proceeds along the same lines as in the deterministic case, allowing
one to reduce high-dimensional Monte-Carlo simulations to low-dimensional
ones. Note that only $\rho\left(E\right)$ continuous derivatives
can be guaranteed for the random SSM even though its deterministic
limit $\mathcal{W}_{0}(E)$ is of the generally higher smoothness
class $C^{r}$. 
\begin{rem}
\textbf{{[}Computing the SSM-reduced model{]}} The graph (\ref{eq:h_0})
of the unforced, deterministic SSM and can be calculated to a high
degree of accuracy using existing the MATLAB software \textit{SSMTool}
(see \citet{jain2022} and \citet{jain23}). Therefore, the leading-order
reduced-order model (\ref{eq:random reduced ODE}) on the random SSM
$\mathcal{W}_{\epsilon}\left(E;\boldsymbol{\theta}^{t}\left(\boldsymbol{\nu}\right)\right)$
can be simulated by using $\mathbf{h}_{0}$ obtained from SSMTool.
Note that the block-diagonalization used in eq. (\ref{eq:block_diagonal})
for mathematical exposistion is not carried out in SSMTool. Rather,
the computations are done directly in the coordinates $\mathbf{{x}}$
appearing in the random ODE (\ref{eq:random ODE}).
\end{rem}
\begin{rem}
\textbf{{[}State-independent random forcing{]}} In applications, the
random forcing model frequently has no dependence on the phase space
variable $\mathbf{x}$. In that case, we have $\mathbf{g}_{\boldsymbol{\xi}}\left(\boldsymbol{\xi},\mathbf{h}_{0}\left(\boldsymbol{\xi}\right),\boldsymbol{\theta}^{t}\left(\boldsymbol{\nu}\right)\right)\equiv\mathbf{g}_{\boldsymbol{\xi}}\left(\boldsymbol{\theta}^{t}\left(\boldsymbol{\nu}\right)\right)$
and hence $\mathbf{D}_{\boldsymbol{\eta}}\mathbf{f}_{0\boldsymbol{\xi}}\equiv\mathbf{0}$
in the reduced equation (\ref{eq:random reduced ODE}). Therefore,
we can simply add the projection of the full random forcing on the
spectral subspace $E$ to the unforced reduced dynamics in $\mathcal{W}_{0}\left(E\right)$
to obtain the reduced random dynamical system
\begin{align}
\dot{\boldsymbol{\xi}} & =\mathbf{f}_{0\boldsymbol{\xi}}\left(\boldsymbol{\xi},\mathbf{h}_{0}\left(\boldsymbol{\xi}\right)\right)+\epsilon\mathbf{g}_{\boldsymbol{\xi}}\left(\boldsymbol{\theta}^{t}\left(\boldsymbol{\nu}\right)\right)+\mathcal{O}\left(\epsilon^{2}\right).\label{eq:random reduced ODE-1-1}
\end{align}
\end{rem}
\begin{rem}
\textbf{{[}Relation to the case of non-random, aperiodic forcing{]}}
Under a single realization of the uniformly bounded random forcing,
the forcing becomes deterministic but still aperiodic. \citet{haller24}
gives asymptotic formulas for the corresponding deterministic, perturbed
SSM $\mathcal{W}_{\epsilon}(E,t)$ up to any finite order of accuracy. 
\end{rem}
\begin{rem}
\textbf{{[}Smoothness of the random SSM{]}} For simplicity, we have
considered the domain $D$ small enough so that $\mathcal{W}_{0}(E)$
had very simple dynamics: all solutions in the deterministic SSM converged
to the origin. This enables us to establish that the strength of normal
hyperbolicity (i.e., how many times normal attraction overpowers tangental
compression along $\mathcal{W}_{0}(E)$) is precisely the spectral
gap $\rho\left(E\right)$ defined in eq. (\ref{eq:spectral-gap}),
which can be computed purely from the spectrum of the matrix $\mathbf{A}$.
This, in turn, enables us to conclude that the random invariant manifold
$\mathcal{W}_{\epsilon}\left(E;\boldsymbol{\theta}^{t}\left(\boldsymbol{\nu}\right)\right)$
is $C^{\rho\left(E\right)}$ smooth (see the Appendix for details).
On larger $D$ domains, however, $\mathcal{W}_{0}(E)$ may have nontrivial
internal dynamics, i.e., may contain further limit sets in addition
to the fixed point at $\mathbf{x}=0$. In that case, an analog of
Theorem \ref{thm:main} continues to hold as long as $\mathcal{W}_{0}(E)$
remains normally hyperbolic, i.e., normal attraction rates to $\mathcal{W}_{0}(E)$
still overpower tangential compression rates everywhere along $\mathcal{W}_{0}(E)$
as $t\to\infty.$ Establishing the strength of normal hyperbolicity
of $\mathcal{W}_{0}(E)$ in that case, however, requires the exact
knowledge of the asymptotic ratio of normal and tangential compression
rates along all limit sets within $\mathcal{W}(E)$. This strength
(and hence the guaranteed smoothness) of the persisting $\mathcal{W}_{\epsilon}\left(E;\boldsymbol{\theta}^{t}\left(\boldsymbol{\nu}\right)\right)$
will then be the integer part of the minimum of these ratios taken
over all limit sets of $\mathcal{W}_{0}(E)$. 
\end{rem}
\begin{rem}
\textbf{{[}Existence of random SSMs for repelling fixed points{]}}
Results similar to those listed in Theorem \ref{thm:main} hold when
the origin is a repelling fixed point, i.e., the matrix $A$ has eigenvalues
\begin{equation}
\mathrm{Re}\lambda_{n}\geq\mathrm{Re}\lambda_{n-1}\geq\ldots\geq\mathrm{Re}\lambda_{1}>0.\label{eq:stable sectrum-1}
\end{equation}
In that case, the surviving random invariant manifold $\mathcal{M}_{\epsilon}(\boldsymbol{}{\theta}^{t}(\boldsymbol{\nu}))$
is overflowing and repels all initial conditions in $D$ for small
enough $\epsilon>0$. 
\end{rem}

\section{Examples}

We consider mechanical systems of the form 
\begin{equation}
\mathbf{M}\ddot{\mathbf{q}}+\mathbf{C\dot{q}}+\mathbf{Kq}+\mathbf{f}_{nl}(\mathbf{q},\mathbf{\dot{q}})=\mathbf{p}\left(\boldsymbol{\theta}^{t}\left(\boldsymbol{\nu}\right),\mathbf{{q},{\dot{{q}}}}\right),\label{eq:eom}
\end{equation}
where $\mathbf{M,C,K}\in\mathbb{R}^{n\times n}$ are positive positive
definite mass matrix, and the positive semidefinite damping and stiffness
matrices, respectively; $\mathbf{q\in}\mathbb{R}^{n}$ is the vector
of generalized coordinates; the function $\mathbf{f}_{nl}(\mathbf{q},\mathbf{\dot{q}})$
is a vector of geometric and material nonlinearities; and $\mathbf{p}\left(\boldsymbol{\theta}^{t}\left(\boldsymbol{\nu}\right),\mathbf{{q},{\dot{{q}}}}\right)$
is a uniformly bounded external or parametric forcing vector with
random time dependence. 

The second order system (\ref{eq:eom}) can be rewritten in its first
order system form

\begin{equation}
\dot{\mathbf{x}}=\mathbf{Ax}+\mathbf{f}_{0}(\mathbf{x})+\mathbf{g}(\boldsymbol{\theta}^{t}\left(\boldsymbol{\nu}\right),\mathbf{{x}}),\quad\mathbf{x}=\left(\mathbf{q},\dot{\mathbf{q}}\right)^{\mathrm{T}},\label{eq:first order system}
\end{equation}
where 
\[
\mathbf{A=\left[\begin{array}{cc}
\mathbf{0} & \mathbf{\mathbf{I}}\\
-\mathbf{M}^{-1}\mathbf{K} & -\mathbf{M}^{-1}\mathbf{C}
\end{array}\right]},\quad\mathbf{f}_{0}(\mathbf{x})=\left[\begin{array}{c}
\mathbf{0}\\
-\mathbf{M}^{-1}\mathbf{f}_{nl}(\mathbf{q},\mathbf{\dot{q}})
\end{array}\right],
\]

\[
\text{ and \quad}\mathbf{g}(\boldsymbol{\theta}^{t}\left(\boldsymbol{\nu}\right),\mathbf{{x}})=\left[\begin{array}{c}
\mathbf{0}\\
\mathbf{M}^{-1}\mathbf{p}\left(\boldsymbol{\theta}^{t}\left(\boldsymbol{\nu}\right),\mathbf{{q},{\dot{{q}}}}\right)
\end{array}\right].
\]
In all our examples, the unforced system admits an asymptotically
stable fixed point at $\mathbf{x}=0$. We will assume that a uniform
bound on the forcing is defined as 

\[
\epsilon=\limsup_{t\in\mathbb{R},\boldsymbol{\nu}\in\mathcal{V},\mathbf{{x}\in}D}\left|\mathbf{g}(\boldsymbol{\theta}^{t}\left(\boldsymbol{\nu}\right),\mathbf{{x}})\right|
\]
for $D\subset\mathbb{{R}}^{2n}$, i.e., we subsume the book-keeping
parameter $\epsilon$ appearing in eq. (\ref{eq:random ODE}) into
the definition of the forcing for convenience.

\subsection*{Random force generation}

In our examples, the forcing vector $\mathbf{p}\left(\boldsymbol{\theta}^{t}\left(\boldsymbol{\nu}\right),\mathbf{{q},{\dot{{q}}}}\right),$
will have the form $\epsilon\theta^{t}(\nu)\mathbf{\,\mathbf{({v}+}{f}_{p}(\mathbf{q,{\dot{{q}}}})})$
where $\mathbf{\mathbf{{v}}}\in\mathbb{R}^{n}$ is a constant vector,
$\mathbf{\mathbf{{f_{p}}}}\in\mathbb{R}^{n}$ is a function representing
parametric excition, $\epsilon$ is a parameter controlling the magnitude
of the uniformly bounded forcing, $\theta^{t}(\nu)$ is a time-dependent
scalar random variable appearing in eq. (\ref{eq:random ODE}) that
is uniformly bounded in time for any $\nu$. We list three possible
ways to generate such noise: 
\begin{itemize}
\item \textbf{Method 1: Force generation from a spectral density.} In some
examples the random forcing signal $\theta^{t}(\nu)$ can be inferred
from an empirical spectral density $\Phi_{\nu}(\omega)=\mathbb{E}[(\theta^{\omega}(\nu))^{2}]$.
Here $\mathbb{E}$ is the expectation value operator and $\theta^{\omega}(\nu)$
is the Fourier transform of one realization of the random forcing
signal $\theta^{t}(\nu)$. Following textbook methods (see \citet{preumont2013random}),
we can numerically evaluate a realization of a forcing signal $\theta^{t}(\nu)$
from the spectral density $\Phi_{\nu}(\omega)$. We specify a frequency
range $\omega\in[\omega_{0},\omega_{M}]$ which is equally spaced
with spacing $\Delta\omega$. The forcing signal for one realization
in this frequency window can then be written as 
\begin{equation}
\theta^{t}(\nu)=\sum_{i=0}^{M}\sqrt{2\Phi_{\nu}(\omega_{0}+i\Delta\omega)\Delta\omega}\cos((\omega_{0}+i\Delta\omega)t+\phi_{i}),\label{eq:quasi-random}
\end{equation}
where $\omega_{N}=\omega_{0}+M\Delta\omega$ and $\phi_{i}$ is a
uniform random number from the interval $[0,2\pi]$. From a numerical
perspective, eq. (\ref{eq:quasi-random}) represents a fast Fourier
transform of the spectral density. Note that for every realization,
one can regard this signal to lie on a $N$-- dimensional torus,
thus the process is only quasi-random. \label{p:1}
\item \textbf{Method 2: L\'evy process with a bounded random number generator.}
A classic method to generate a truly random process in time is to
advect a random ODE describing Brownian motion. However in that case,
the random process is an Itô process and the noise generation is sampled
from a Gaussian distribution $\mathcal{{N}}$ which is unbounded.
This translates to the increments of the random variable given by
$(\theta^{t+\Delta t}(\nu)-\theta^{t}(\nu))$ following the probability
density function $\mathcal{N}(0,\Delta t)$ for all $t\geq0$. We
make this process bounded by prescribing the increments to follow
a truncated Gaussian distribution $\mathcal{NT}(0,\Delta t;a,b)$,
where the density function is $0$ outside the amplitude interval
$[a,b]$. If we choose a symmetric interval and also ensure the interval
encompasses regions of high probability, the truncated Gaussian increments
offers a bounded Brownian motion which is very close to the one generated
by an It\^o process. Since all our examples are mechanical systems,
it is convenient to model the randomness using a second-order system
(see \citet{KOZIN198858}) of the form 
\begin{equation}
m\ddot{a}+c\dot{a}+ka=\theta^{t}(\nu),\label{eq:filter}
\end{equation}
where $\ddot{a}$,$\dot{a}$ or $a$ are interpreted as physically
relevant scalar quantities like ground acceleration during an earthquake,
uniform fluid velocity or road elevation. The forcing $\theta^{t}(\nu)$
is a random process generated from a truncated Gaussian distribution.
This setup offers practical benefits when implemented in a numerical
scheme as one can evaluate the random forced response in real time
by coupling the above ODE to the mechanical system (\ref{eq:eom}).
\label{p:2}
\item \textbf{Method 3: It\^o process with reflective boundary conditions.}
Another possible method to implement a bounded noise generator is
to impose doubly reflective boundary conditions on one of the stochastic
random variables generated from eq. (\ref{eq:filter}) using a Gaussian
noise model for $\theta^{t}(\nu)$. The generated random variable
will always be confined to $[-1,1]$. \label{p:3}
\end{itemize}

\subsection*{Random forced response evaluation}

In our examples, the linear part of the unforced mechanical system
has a slow spectral subspace $E$ with $d=2$ (see eq. (\ref{eq:slow-subspace})).
Therefore, under uniformly bounded random forcing, Theorem \ref{thm:main}
is applicable. This allows us to construct a 2D time-dependent random
SSM $\mathcal{W}_{\epsilon}\left(E;\boldsymbol{\theta}^{t}\left(\boldsymbol{\nu}\right)\right)$.
Specifically, we use \textit{SSMtool} by \citet{jain23} to compute
the autonomous SSM $\mathcal{{W}}_{0}(E)$ as a graph $\boldsymbol{\eta}=\mathbf{h}_{0}(\boldsymbol{\xi})$
and its reduced dynamics $\mathbf{f}_{\boldsymbol{\xi}}(\boldsymbol{\xi},\mathbf{h}_{0}(\boldsymbol{\xi}))$
as order-$N$ multivariate polynomials. We then explicitly compute
the leading-order corrections (listed in Theorem \ref{thm:main})
to obtain the random SSM-reduced model.

We test the the accuracy of the random reduced-order models obtained
in this fashion by performing Monte Carlo simulations. We repeatedly
sample $m$ random forcing realizations using one of the outlined
methods (see Methods 1-3) and record the random forced response outcomes
of the full and reduced-order models for every realization. We further
compute the averaged spectral density (PSD) matrix $\boldsymbol{\Phi}_{\mathbf{x}}(\omega)\in\mathbb{C}^{2n\times2n}$
defined as (see \citet{papoulis1965random}) 
\begin{equation}
\boldsymbol{\Phi}_{\mathbf{x}}(\omega)=\mathbb{E}[\mathbf{x}(\omega)(\mathbf{x}(\omega))^{\dagger}]=\frac{1}{m}\sum_{j=1}^{m}\mathbf{x}^{j}(\omega)(\mathbf{x}^{j}(\omega))^{\dagger},
\end{equation}
where $\mathbf{x}^{j}(\omega)$ is the fast-Fourier transform of the
forced outcome $\mathbf{x}^{j}(t)$ and $[\cdot]^{\dagger}$ denotes
the complex conjugate transpose operation. Note that the diagonal
entries of $\boldsymbol{\Phi}_{\mathbf{x}}(\omega)$ are real and
represent the energy content of our outcomes. We will plot one of
these diagonal entries against the frequency $\omega$ to visualize
the performance of the reduced-order model in comparison to the full
system. 

We label a diagonal entry of the matrix as $\phi_{\mathbf{x}}^{f}(\omega)$,
where $f$ refers to a specific position or velocity component of
the system. For time integration, we use implicit methods for the
full system (see \citet{geradin2015mechanical}). For the full system
simulation in Method 1, we use the implicit Newmark scheme. In Methods
2 and 3, we formulate a second-order implicit scheme provided in \citet{secondOrderSDE}.
For the time integration of SSM-reduced systems, we apply a Runge-Kutta
scheme as described in \citet{SDE}. We also advect both the full
and the SSM-reduced models from the same initial condition, which
always lies on the random SSM.

\subsection{Suspension system moving on an irregular road\label{subsec:Suspension-system-moving}}

\label{sec:Suspension}

For the design of the suspension of a car or an offshore platform,
one needs to model the stability of these systems under road or sea
irregularity. We use here a simplified model of such a suspension
proposed by \citet{quarter-car}, as shown in Fig. \ref{fig:quater_car}.
We modify the original example by adding nonlinear cubic springs that
also introduces parametric forcing.

The equations of motion (\ref{eq:eom}), with $\mathbf{q}=(x_{s},x_{u})^{\top}$
denoting the displacements of the spring blocks in the suspension
system, are specifically
\begin{align}
\mathbf{M}=\begin{bmatrix}m_{s} & 0\\
0 & m_{u}
\end{bmatrix},\quad\mathbf{C}=\begin{bmatrix}c_{s} & -c_{s}\\
-c_{s} & c_{s}+c_{u}
\end{bmatrix},\quad\mathbf{K}=\begin{bmatrix}k_{s} & -k_{s}\\
-k_{s} & k_{s}+k_{u}
\end{bmatrix},\label{eq:suspension_system}
\end{align}
\[
\text{ and \ensuremath{\quad}}\mathbf{f}_{nl}(\mathbf{q})=\begin{pmatrix}\kappa_{1}(x_{s}-x_{u})^{3}\\
-\kappa_{1}(x_{s}-x_{u})^{3}-\kappa_{2}x_{u}^{3}
\end{pmatrix}.
\]

\begin{figure}[H]
\begin{centering}
\subfloat[]{\begin{centering}
\includegraphics[width=0.45\textwidth]{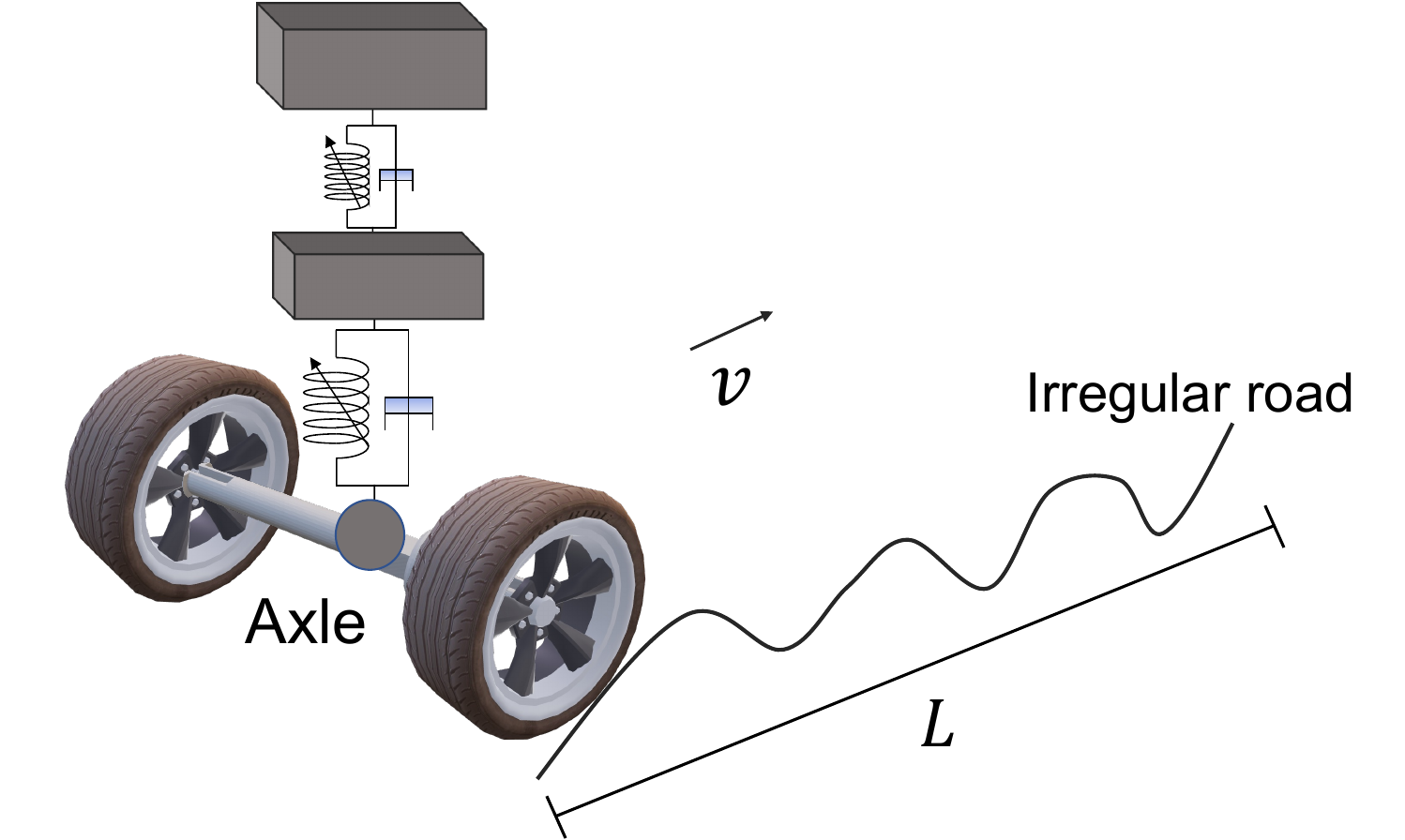} 
\par\end{centering}
}\qquad{}\qquad{}\subfloat[]{\begin{centering}
\includegraphics[width=0.35\textwidth]{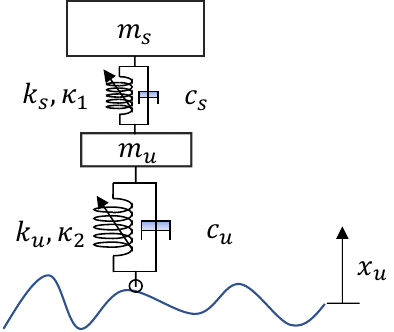} 
\par\end{centering}
}
\par\end{centering}
\caption{Left: An illustration of the suspension model of \citet{quarter-car}
moving with speed $v$ along an irregular road of length $L$ . Right:
A side profile of the model with system parameters shown.}
\label{fig:quater_car} 
\end{figure}

In eq. (\ref{eq:suspension_system}), we have ignored gravity for
simplicity, which will slightly perturb the equilibrium configuration
from $\mathbf{x}=\mathbf{{0}}$. We further assume that the system
is travelling at a constant speed $v$ along a road of length $L.$
This allows us to write the force exerted by the irregular road on
the system as 
\begin{equation}
\mathbf{p}\left(\boldsymbol{\theta}^{t}\left(\boldsymbol{\nu}\right),{\mathbf{{q}}},{\mathbf{{\dot{{q}}}}}\right)=\epsilon\begin{pmatrix}0\\
k_{u}\theta^{t}(h)+c_{u}\theta^{t}(\nabla h)v+\kappa_{2}[\theta^{t}(h)]^{3}-3\kappa_{2}[\theta^{t}(h)]^{2}x_{u}+3\kappa_{2}x_{u}^{2}\theta^{t}(h)
\end{pmatrix},
\end{equation}
where $h$ and $\nabla h$ are random variables denoting road elevation
and gradient, and $\epsilon$ is a measure of the overall magnitude
of forcing. There are empirical expressions for the spectral densities
to model road elevation (see \citet{quarter-car}) given by 
\begin{equation}
\phi_{h}(\omega)=\frac{A_{v}bv}{(bv)^{2}+\omega^{2}},
\end{equation}
where the coefficients $A_{v}$ and $b$ depend on the road irregularity,
and $v$ is the speed of the system as mentioned.

We estimate the spectral density of the gradient from the elevation
as $\phi_{\nabla h}(\omega)=\frac{\omega^{2}}{v}\phi_{h}(\omega)$.
We then use Method 1 to sample for the stochastic signals $\theta^{t}(h)$
and $\theta^{t}(\nabla h)$ until the car completes the travel time
$\frac{L}{v}$ on the road. We perform the sampling for $50$ forcing
realizations ($m=50$). We set the system parameters as $m_{s}=229\text{ [kg]}$,
$m_{t}=31\text{ [kg]}$, $c_{s}=120\text{ [Ns/m]}$, $c_{u}=120\text{ [Ns/m]}$,
$k_{s}=60\text{ [kN/m]}$, $k_{u}=20\text{ [kN/m]}$, $\kappa_{1}=250\text{ [kN/m\ensuremath{^{3}}]}$,
$\kappa_{2}=30\text{ [kN/m\ensuremath{^{3}}]}$ and $v=30\text{ [m/s]}$.
We set the forcing parameters to be $L=1800\text{ [m]}$, $A_{v}=3.5\times10^{-5}\text{ [m\ensuremath{^{2}}]}$
and $b=0.4\text{ [rad/m]}$. The values of road roughness are taken
from \citet{quarter-car}.

\begin{figure}[H]
\begin{centering}
\includegraphics[width=1\textwidth]{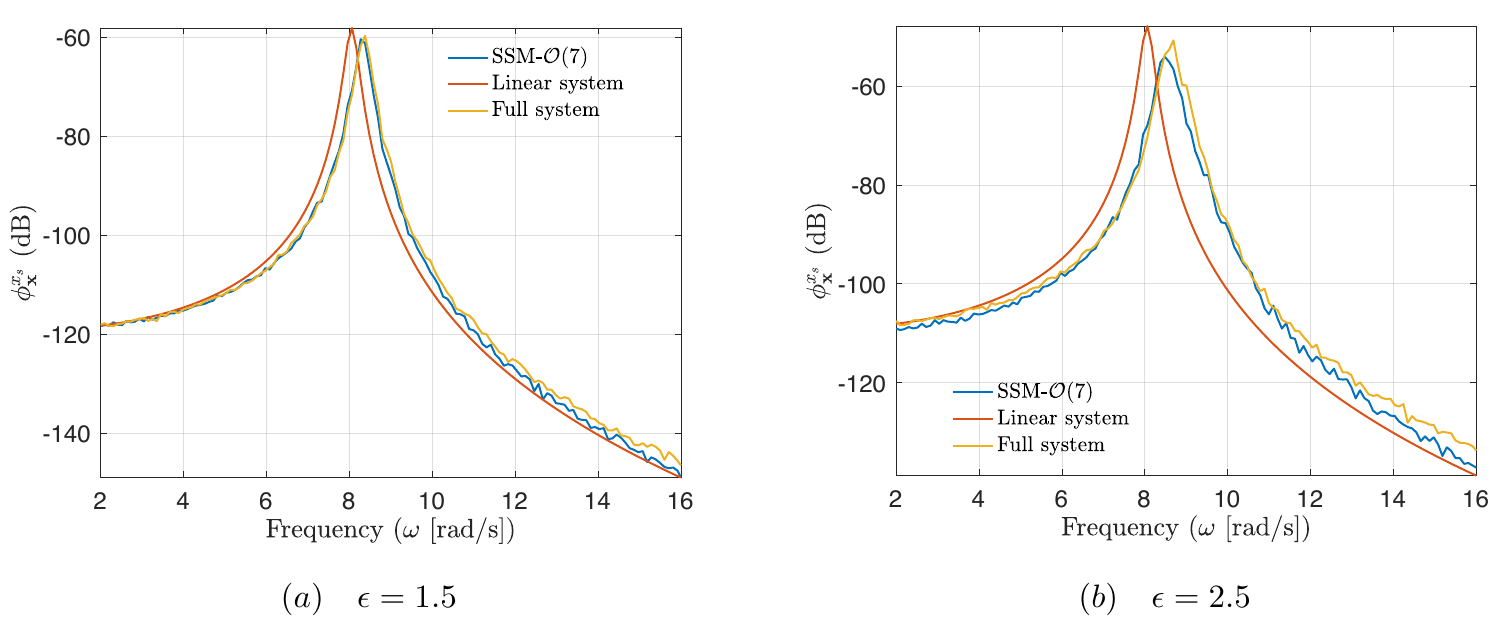} 
\par\end{centering}
\caption{ PSD $\phi_{\mathbf{x}}^{x_{s}}$ of the displacement of mass $m_{s}$
obtained from the SSM-reduced order model (blue) and for the full
order model simulation (yellow) for two regimes of road irregularities.
We also plot in red the analytic PSD for the linear system. (a) $\epsilon=1.5$
indicates minor road elevations (b) $\epsilon=2.5$ the road irregularities
are amplified. }
\label{fig:psd_qc} 
\end{figure}

Figure \ref{fig:psd_qc} shows the power spectral density $\phi_{\mathbf{x}}^{x_{s}}(\omega)$
of the displacement of the upper mass $m_{s}$ in decibel {[}dB{]},
calculated for the linear system, the full nonlinear system and the
random SSM-reduced model with an order $N=7$ multivariate Taylor
expansion for two regimes of road irregularity. When the road irregularity
is minimal ($\epsilon=1.5$), the linear response is close to the
the nonlinear response of the system, and our random SSM-reduced model
reproduces this trend. As expected, once we increase the road irregularity,
the nonlinear system deviates from the linear one and our leading-order
SSM-reduced model captures this trend. The linear predictions are
smooth because we can compute the response statistics analytically
(see Appendix B for details).

\subsection{Earthquake response of a building}

\label{sec:building}

Inspired by building models in civil engineering for modelling earthquake
response (see \citet{seismicModel}), we now consider a simple $n$-storey
building modelled by a vertical oscillator chain as shown in Fig.
\ref{fig:building}. Each mass represents a floor and gravity is neglected
for simplicity. The system is of the form (\ref{eq:eom}) with 
\begin{align}
\mathbf{q} & =[u_{1}\,,u_{2},\,...\,u_{n}]^{\top},\quad\mathbf{C}=\beta\mathbf{K}+\alpha\mathbf{M},\nonumber \\
\mathbf{M} & =\begin{bmatrix}m_{1} & 0 & 0\\
0 & \ddots & 0\\
0 & 0 & m_{n}
\end{bmatrix}\text{\ensuremath{\quad}{and}}\quad\mathbf{K}=\begin{bmatrix}k_{1}+k_{2} & -k_{2} & 0 & \cdots & \cdots\\
-k_{2} & k_{2}+k_{3} & -k_{3} & 0 & \cdots\\
0 & -k_{3} & k_{3}+k_{4} & \cdots & \cdots\\
\vdots & \vdots & \vdots & \ddots & 0\\
\vdots & \vdots & \vdots & k_{n-1}+k_{n} & -k_{n-1}\\
0 & \cdots & 0 & -k_{n-1} & k_{n}
\end{bmatrix}.
\end{align}
The nonlinearity $\mathbf{f}_{nl}(\mathbf{q},\mathbf{\dot{q}})$ in
this example comprises cubic nonlinear springs placed between neighbouring
storeys with hardening parameter $\kappa$. To model earthquake response,
we use Method 3 to generate bounded horizontal ground acceleration
noise. Specifically, we use the random variable ${a}$ in eq. (\ref{eq:filter})
with parameters $k=2\times10^{-2}\text{ [N/m]}$, $c=1\times10^{-1}\text{ [Ns/m]}$
and $m=5\times10^{-3}\text{ [kg]}$. The forcing vector is $\mathbf{p}\left(\boldsymbol{\theta}^{t}\left(\boldsymbol{\nu}\right)\right)=\epsilon\ddot{u}_{G}(m_{1},\dots,m_{n})^{\top}$,
where $\epsilon$ is a measure of the ground acceleration intensity
and $\ddot{u}_{G}=a$.

\begin{figure}[H]
\begin{centering}
\includegraphics[width=0.6\textwidth]{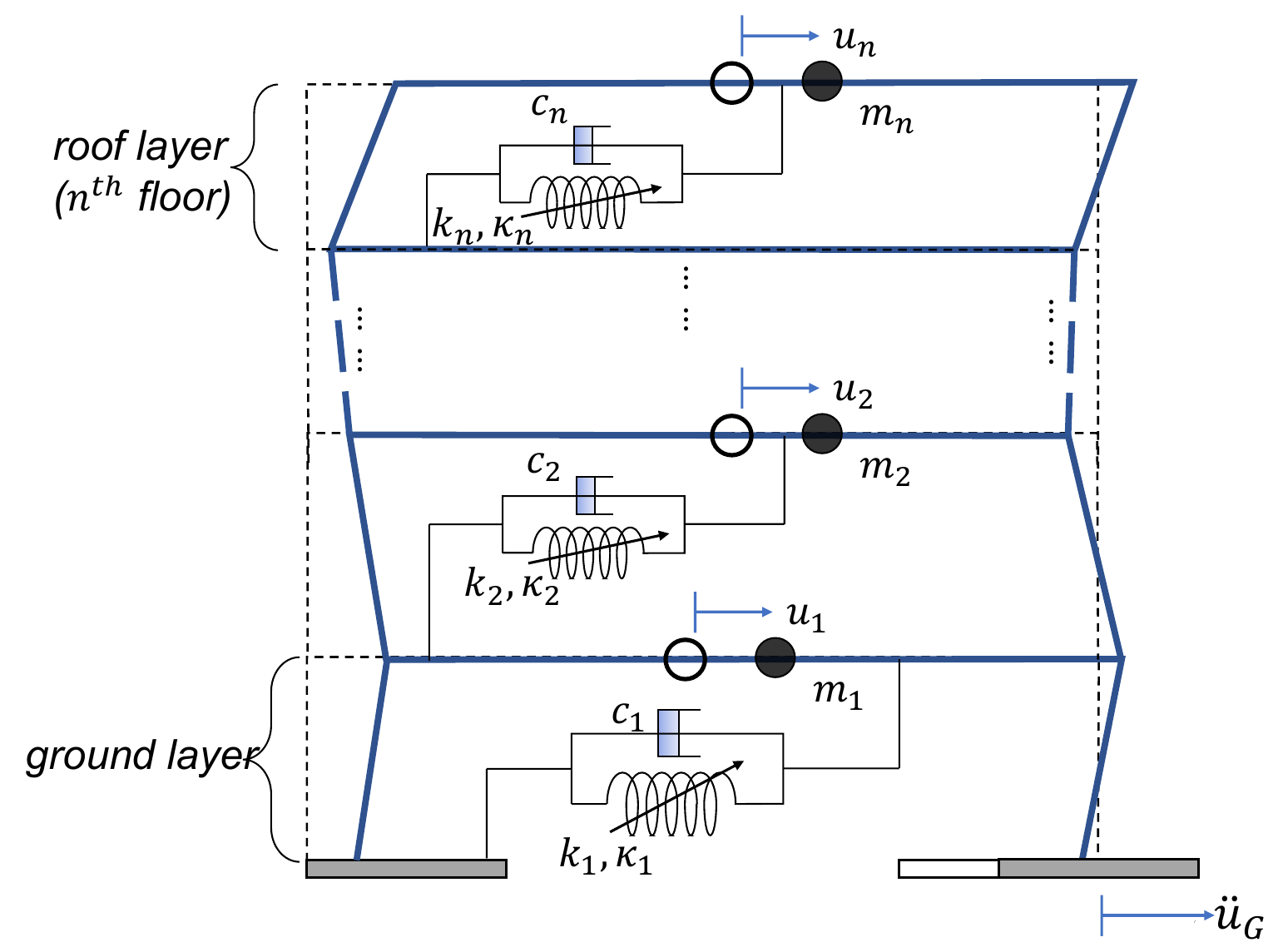} 
\par\end{centering}
\caption{Schematic model of $n$-storey building under seismic excitation.}
\label{fig:building} 
\end{figure}

We set the building parameters as $m_{1}=m_{2}=...=m_{n}=7\text{ [kg]}$,
$\alpha=0\text{ [Hz]}$, $\beta=0.0198\text{ [s]}$, $k_{1}=...=k_{n}=4555\text{ [N/m]}$
and $\kappa=2000\text{ [N/m\ensuremath{^{3}}]}$. The total number
of floors in our building is $n=10$. Similarly to the previous example,
we perform $m=50$ Monte Carlo simulations and compute the averaged
PSD for the degree of freedom $u_{10}$, which represents the roof
of our building model. We find that computing an order $N=5$ random
SSM-reduced model is sufficient to reproduce the statistics of the
full system.

In contrast to the suspension model in Section \ref{subsec:Suspension-system-moving},
the current setup induces truly stochastic forcing in time (see Methods
1-3); is maximally coupled with nonlinear springs; and is of higher
dimensions. Our plots in Figs. \ref{fig:psd_building}a, b indicate
that a 2D random SSM model suffices to capture the PSD of the full
system.

\begin{figure}[H]
\begin{centering}
\includegraphics[width=1\textwidth]{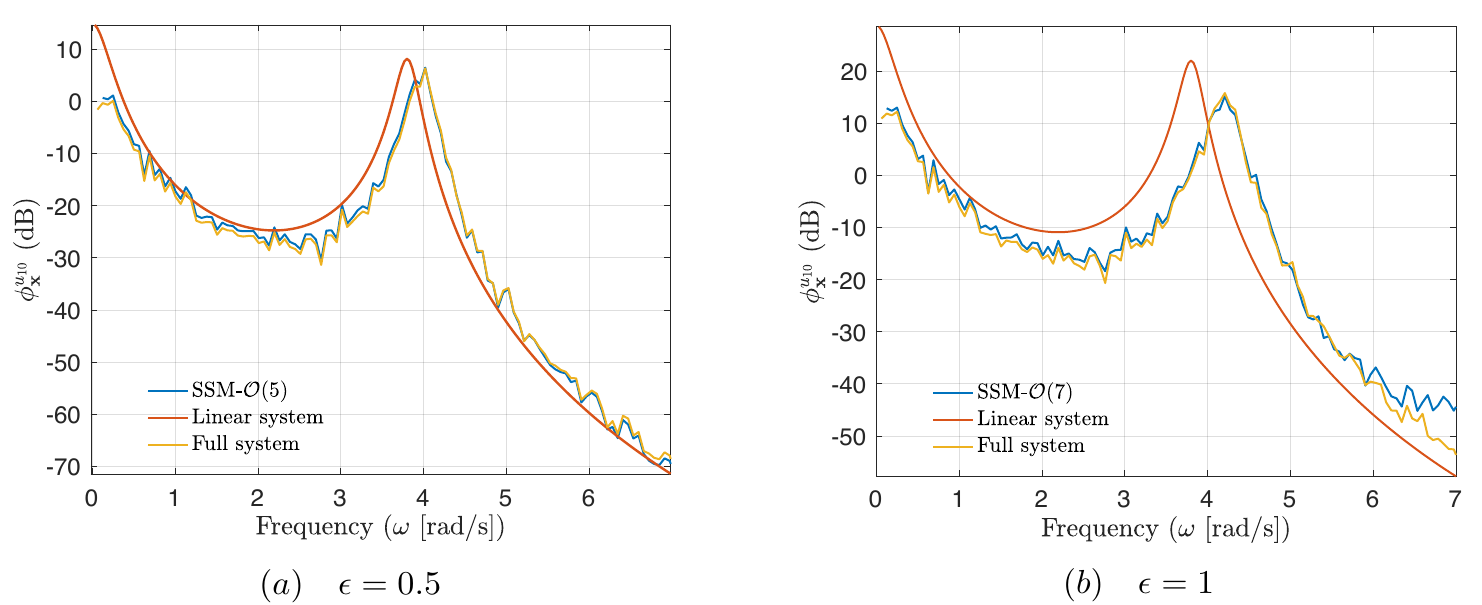} 
\par\end{centering}
\caption{PSD $\phi_{\mathbf{x}}^{u_{10}}$ of the roof displacement of the
building, from the SSM-reduced model (blue) and from the full simulation
(yellow) for two earthquake magnitudes. We also plot in red the analytic
PSD for the linear system. (a) $\epsilon=0.5$ models small ground
acceleration intensity. (b) $\epsilon=1$ models large ground acceleration
intensity. }
\label{fig:psd_building} 
\end{figure}

Our leading-order random SSM-reduced models capture the deviations
from linearity as the forcing magnitude $\epsilon$ is increased to
$1$ (see Fig. \ref{fig:psd_building}b). Reduction to the random
SSM speeds up the Monte Carlo simulations significantly. In Table
\ref{tab:building}, we show that the time for the full simulation
is nearly $7$ times the combined time taken for constructing the
SSM-reduced order model and performing the reduced Monte Carlo simulations
(just $35$ seconds).

\smallskip{}

\begin{table}[H]
\begin{centering}
\begin{tabular}{|c|c|c|}
\hline 
\multirow{1}{*}{Full system (HH:MM:SS)} & 2D SSM (HH:MM:SS)  & Number of MC simulations\tabularnewline
\hline 
00:03:22  & 00:00:35  & 50\tabularnewline
\hline 
\end{tabular}
\par\end{centering}
\caption{Run times for the full building model and its random SSM-reduced counterpart
for $50$ Monte Carlo simulations in the $\epsilon=1$ forcing regime.
All computations were performed on MATLAB version 2022b installed
on 50 nodes of the ETH Z\"urich Euler supercomputing cluster with Intel(R)
Xeon(R) CPU E3-1284L v4 @ 2.90GHz processor.}
\label{tab:building} 
\end{table}

\subsection{Random base excitation of a von K\'arm\'an beam}

\label{sec:beam}

Our previous examples focused on discrete models with localized spring
nonlinearities. We now proceed to demonstrate the applicability of
our methods to more challenging continuum beam models with distributed
nonlinearities. We use a finite element model (FEM) approximation
for the the von Kármán beam as described by \citet{jain2018} which
is based on textbook methods by \citet{reddy2014introductionNonlinearFEM}.
This model is implemented in a finite- element solver \citet{yetFem},
which yields the matrices $\mathbf{M}$, $\mathbf{C}$, $\mathbf{K}$
and the nonlinearity by $\mathbf{f}_{nl}(\mathbf{q},\mathbf{\dot{q}})$
in the form (\ref{eq:eom}) for specific parameter values and boundary
conditions. We use 20 beam elements and impose a cantilevered boundary
condition at one end to describe the continuum model, which results
in $n=60$ degrees of freedom. The position vector of the finite element
model is $\mathbf{q}=\begin{pmatrix}x_{1}, & z_{1}, & w_{1}, & \dots & x_{20}, & z_{20}, & w_{20}\end{pmatrix}^{\top}$.
Here $x$ is the axial displacement, $z$ the transverse displacement
and $w$ the deflection angle measured from the beam axis for one
element of the beam. In Fig. \ref{fig:beam}, we show the beam and
list the parameters used in our simulations. These parameter values
correspond to a slender sheet made up of aluminium.

\begin{figure}[H]
\begin{centering}
\includegraphics[width=0.6\textwidth]{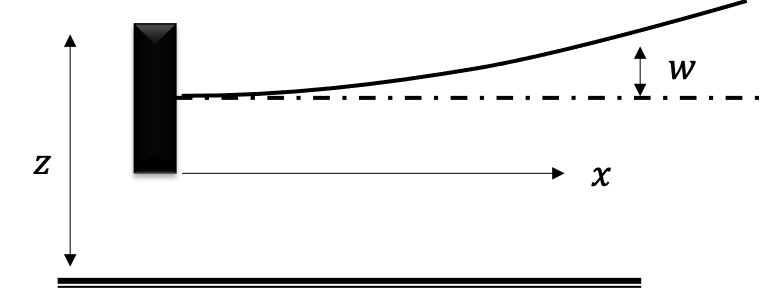} 
\par\end{centering}
\caption{Schematic view of a cantilevered beam model. The material density
is 2,700 $\text{[kg/m\ensuremath{^{3}}]}$, the Young's modulus is
$70\text{ [GPa]}$, the Poisson's ratio is 0.3, the length is $1\text{ [m]}$,
the width is $1\text{ [m]}$ and the thickness is $0.001\text{ [m]}$. }
\label{fig:beam} 
\end{figure}

We subject this beam to random base excitation, which emulates seismic
excitations applied to the cantilevered end of the beam. Accordingly,
the forcing vector takes the form $\mathbf{p}\left(\boldsymbol{\theta}^{t}\left(\boldsymbol{\nu}\right)\right)=-\epsilon\theta^{t}(\nu)\mathbf{M}\begin{pmatrix}0, & 1, & 0, & \dots & ,0, & 1, & 0\end{pmatrix}^{\top}$.
To generate the bounded random signal $\theta^{t}(\nu)$ we use Method
2 and output the random variable $\ddot{a}$ for parameter values
$m=5\text{ [kg]}$, $c=100\text{ [Ns/m]}$, $k=20\text{ [N/m]}$ from
(\ref{eq:filter}).

We monitor the transverse tip displacement of the beam for two maximal
base accelerations under the forcing parameters $\epsilon=0.5$ and
$\epsilon=1$. We find that computing an order $N=7$ random SSM-reduced
model suffices to reproduce the statistics of the full order model
for both forcing magnitudes.

In Figs. \ref{fig:psd_beam}a,b, we plot the PSD (in dB) of the tip
displacement for $m=50$ Monte Carlo simulations. Our SSM-reduced
computations show close agreement with full order model. For larger
forcing magnitudes $\epsilon=1$, the forced response is amplified
for all the models (see Fig. \ref{fig:psd_beam}b). Due to no nonlinear
damping, the linear model overestimates the amplification while the
leading-order random SSM-reduced model matches the true response accurately.

\begin{figure}[H]
\includegraphics[width=1\textwidth]{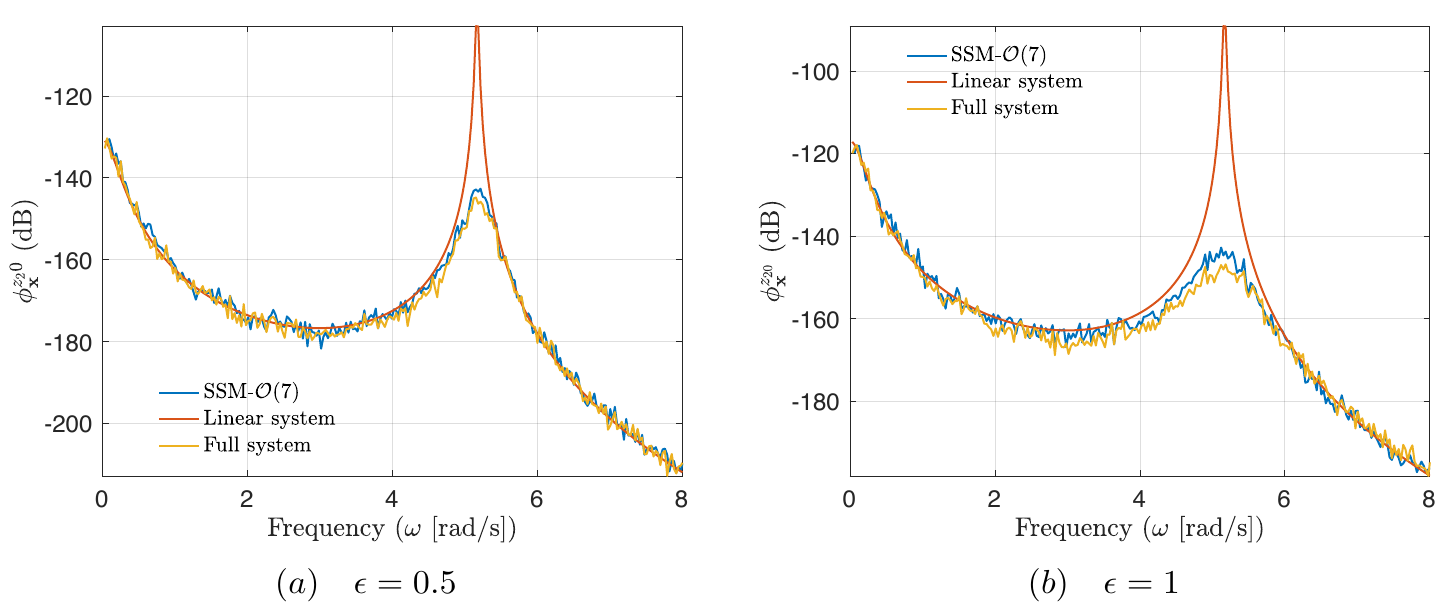}

\caption{PSD $\phi_{\mathbf{x}}^{z_{20}}$ of the transverse tip displacement
of the beam obtained from the SSM-reduced order model (blue) and from
the full order model simulation (yellow) for two earthquake magnitudes.
We also plot in red the analytic PSD for the linear system. (a) $\epsilon=0.5$
models small ground acceleration intensity (b) $\epsilon=1$ models
large ground acceleration intensity. }
\label{fig:psd_beam} 
\end{figure}

As before in Table \ref{tab:beam}, we list the run times of $m=50$
Monte Carlo experiments for the full and reduced models. The beam
problem has particularly low damping and to reach any relevant statistics
one must simulate the forcing realizations for long times. Still,
in just under a minute and a half, our random SSM-reduced order models
provide averaged forced outcomes. For comparison, the full order model
takes roughly 2 hours and 40 minutes for the MC simluations. \smallskip{}

\begin{table}[H]
\begin{centering}
\begin{tabular}{|c|c|c|}
\hline 
\multirow{1}{*}{Full system (HH:MM:SS)} & 2D SSM (HH:MM:SS)  & Number of MC simulations\tabularnewline
\hline 
02:39:42  & 00:01:26  & 50\tabularnewline
\hline 
\end{tabular}
\par\end{centering}
\caption{Run times for beam model and reduced model for $50$ Monte Carlo simulations
in the $\epsilon=1$ forcing regime. All computations were performed
on MATLAB version 2022b installed on 50 nodes of the ETH Z\"urich Euler
supercomputing cluster with Intel(R) Xeon(R) CPU E3-1284L v4 @ 2.90GHz
processor.}
\label{tab:beam} 
\end{table}

\subsection{Von K\'arm\'an plate subject to a stochastic pressure field}

\label{sec:plate} Our final example is a continuum plate model used
in previous SSM-related studies under deterministic forcing (see \citet{jain2022}
and \citet{li22a}). We modify the plate model to be flat, i.e., set
the curvature to zero, and impose simply supported boundary conditions
at the edges $A$ and $D$, as shown in Fig. \ref{fig:plate}.

\begin{figure}[H]
\begin{centering}
\includegraphics[width=0.6\textwidth]{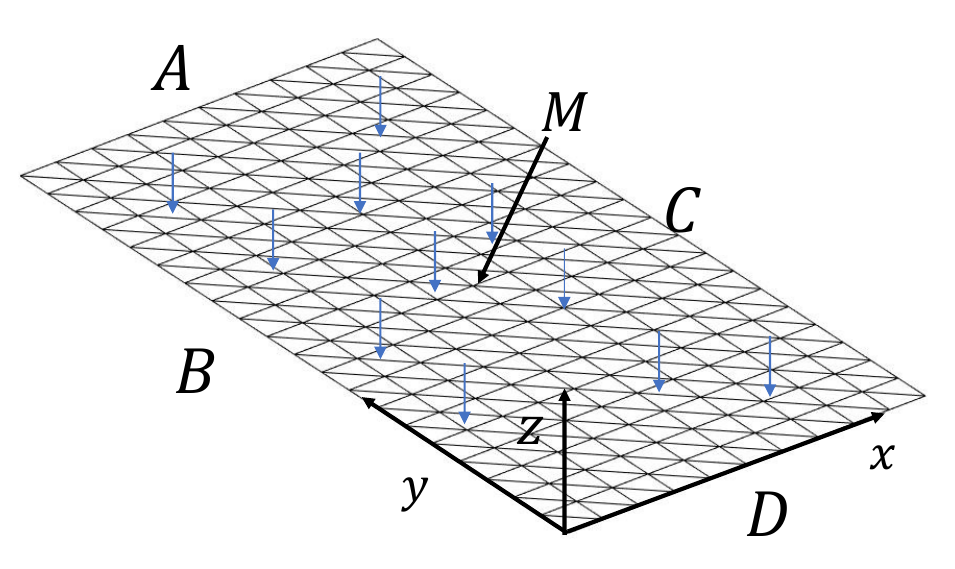} 
\par\end{centering}
\caption{Schematic view of the von Kármán plate model. The material density
is 2,700 $\text{[kg/m\ensuremath{^{3}}]}$, the Young's modulus is
$70\text{ [GPa]}$, Poisson's ratio is 0.33, the length is $2\text{ [m]}$,
the width is $1\text{ [m]}$ and the thickness $0.01\text{ [m]}$.
Blue arrows indicate the direction of the uniform body force applied.
The midpoint of the plate, denoted as $M$, is used to monitor the
output signal.}
\label{fig:plate} 
\end{figure}

The plate is made up of flat triangular shell elements with each node
on the triangle having 6 degrees of freedom. In Fig. \ref{fig:plate},
we show these elements with the caption stating the parameter values
used in our simulations. Specifically, we chose 400 elements which
amounts to 1,320 degrees of freedom. We plug in these parameter values
and element numbers in the FE solver of \citet{yetFem} to obtain
the quantites on the left-hand side of eq. (\ref{eq:eom}).

As for the external forcing, we apply a random uniform body force
in the vertical direction $\mathbf{v}$. This models external wind
pressure acting on a flat plate, where the wind velocity is the random
variable (see \citet{windSDE}). We generate the stochastic velocity
using Method 2, with the random variable $\dot{a}$ in eq. (\ref{eq:filter})
modeling the wind velocity. We set $m=5\text{ [kg]}$, $c=100\text{ [Ns/m]}$,
$k=20\text{ [N/m]}$ in eq. (\ref{eq:filter}) to obtain the realisations
for $\dot{a}$. Piecing all this together, the forcing vector is $\mathbf{p}\left(\boldsymbol{\theta}^{t}\left(\boldsymbol{\nu}\right)\right)=\epsilon c_{d}\rho[\dot{a}(t)]^{2}\mathbf{v}$.
We further set $c_{d}\rho=2\text{ [kg/m\ensuremath{^{3}}]}$, where
$c_{d}$ is the drag coefficient of the plate in air and $\rho$ is
the density of the air.

\begin{figure}[H]
\begin{centering}
\includegraphics[width=1\textwidth]{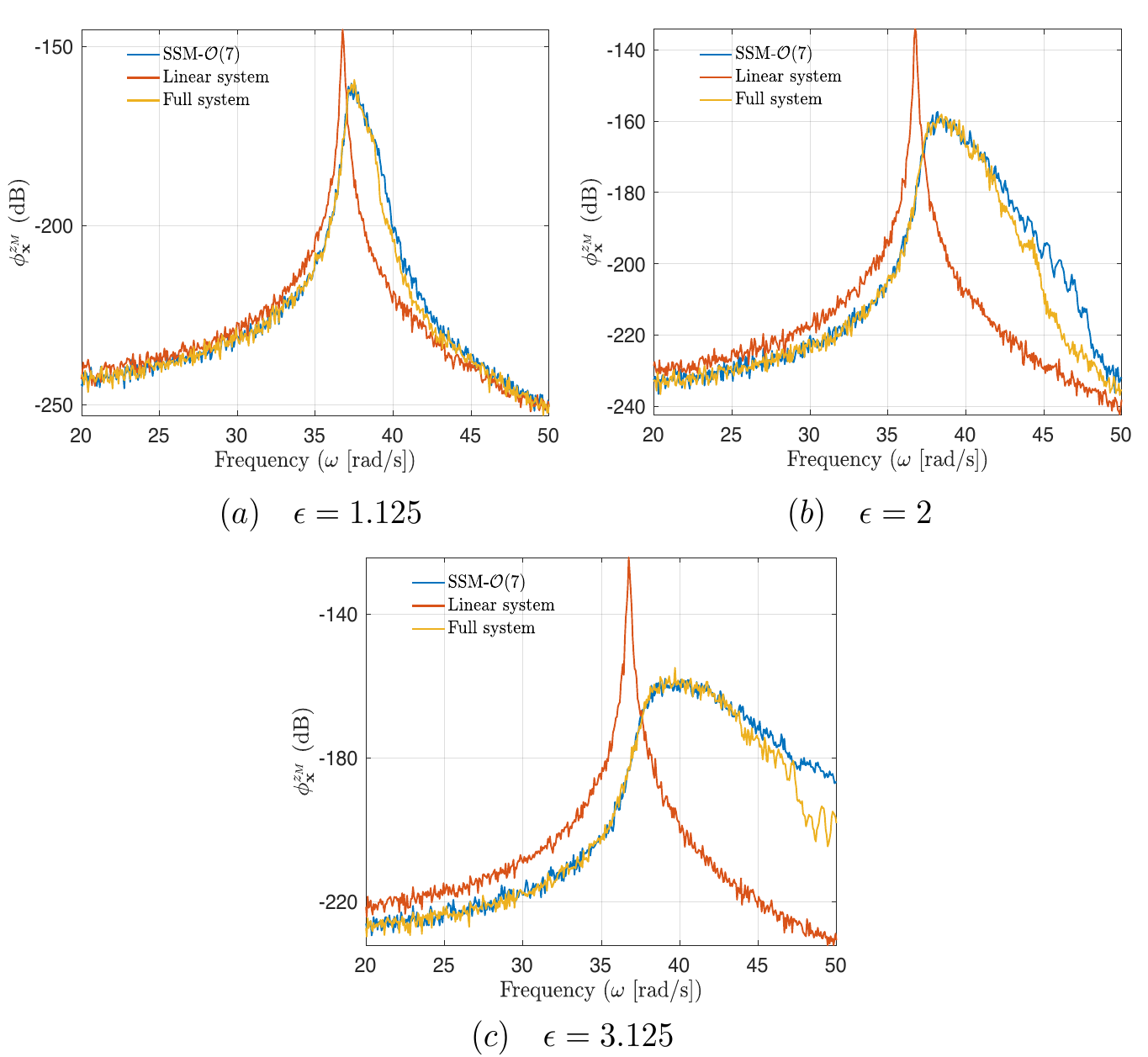} 
\par\end{centering}
\caption{PSD $\phi_{\mathbf{x}}^{z_{M}}$ of the out-off plane displacement
of the center point $M$ obtained from the SSM-reduced order model
(blue) and from the full-order model (yellow) for two earthquake magnitudes.
We also plot in red the numerical PSD for the linear system for different
$\epsilon$ values. (a) $\epsilon=1.125$ (small wind pressure fluctuations).
(b) $\epsilon=2$ (moderate wind pressure fluctuations). (c) $\epsilon=3.125$
(large wind pressure fluctuations). }
\label{fig:psd_plate} 
\end{figure}

In this example, we compute a random SSM-reduced model up to order
$N=7$. We again validate our results with the full order model for
$m=50$ Monte Carlo simulations. In Figs. \ref{fig:psd_plate}a-c,
we see the PSDs of the reduced-order models capture the trends of
the true system for increasing forcing magnitudes. These trends are
well documented in the literature for plate models (see, e.g., \citet{Reinhall1989effect}).

The run time comparison between the full and reduced models now yields
a dramatic difference. The combined time for constructing the random
SSM model and the complete Monte Carlo simulation is under 2 minutes.
For the full order model simulation it takes nearly half a day ($13$
hours). We refer to Table \ref{tab:plate} for the exact solve times.

\smallskip{}

\begin{table}[H]
\begin{centering}
\begin{tabular}{|c|c|c|}
\hline 
\multirow{1}{*}{Full system (HH:MM:SS)} & 2D SSM (HH:MM:SS)  & Number of MC simulations\tabularnewline
\hline 
12:50:40  & 00:01:45  & 50\tabularnewline
\hline 
\end{tabular}
\par\end{centering}
\caption{Run times for plate model and reduced model for $50$ Monte Carlo
simulations in the $\epsilon=3.125$ forcing regime. All computations
were performed on MATLAB version 2022b installed on 50 nodes of the
ETH Z\"urich Euler supercomputing cluster with Intel(R) Xeon(R) CPU
E3-1284L v4 @ 2.90GHz processor.}
\label{tab:plate} 
\end{table}

\section{Conclusion}

We have extended prior mathematical results on the existence and reduced
dynamics of spectral submanifolds (SSMs) from deterministic dynamical
systems to dynamical systems with additive and uniformly bounded random
noise. While the mathematical proofs of our results are valid for
the case of small external random forcing, we expect them to extend
to larger forcing values as well, as seen in the examples we presented.
This is in line with our findings for moderate deterministic aperiodic
forcing (see, e.g., \citet{haller24}).

Our main result is that appropriately defined random attracting invariant
manifolds perturb from primary attracting SSMs of the deterministic,
unforced limit of a dynamical system. These random SSMs are no longer
tangent to eigenspaces of the linearized system but continue to be
invariant under any given realization of the flow map and have the
same degree of smoothness as their unforced counterpart. The leading-order
reduced dynamics on the random SSMs consists of the reduced dynamics
of the deterministic limit of the SSM with an additive random term.
The deterministic SSM-reduced dynamics can be computed via equation-driven
or data-driven model reduction applied to the unforced system using
SSMTool (\citet{jain23}) or SSMLearn (see \citet{cenedese21}), respectively.
For state-independent random forcing, the additional random term in
the leading-order SSM-reduced dynamics is simply the projection of
the random forcing to the spectral subspace. 

Trajectories of the full system in the domain of attraction of random
SSMs synchronize exponentially fast with trajectories of the SSM-reduced
dynamics. This enables us to perform Monte-Carlo simulations directly
on the SSM-reduced dynamics to obtain the response statistics of the
full system. Such an SSM-reduced computation brings an immense speed-up
relative to simulations performed on full finite-element problems.
This is generally true for any model obtained from model-reduction
tools but SSM-reduction comes with mathematical guarantees and requires
no tuning of hyperparameters or training of neural nets. Importantly,
the deterministic core of the SSM-reduced dynamics remains the same
even if one changes the noise model. 

The leading-order SSM-reduction formulas derived in this paper only
allow us to test our results with the full order model up to a certain
magnitude of forcing. This limitation can be addressed by computing
higher-order corrections due to forcing. Such corrections have already
been computed in recent work (\citet{haller24}) and integration of
these results into the package \textit{SSMtool} is currently underway.
At this point, those higher-order corrections are already directly
applicable to any given realization of a bounded random forcing model,
as was illustrated by \citet{haller24} for chaotic forcing obtained
from numerical simulations of the classic Lorenz attractor.

In this paper, we have illustrated random SSM reduction in an equation-driven
setting where the deterministic core of the reduced model was obtained
from SSMTool. Data-driven random SSM reduction proceeds along the
same lines, with SSMLearn providing the deterministic part of the
reduced model. A continually growing body of literature is available
on the accuracy of low-dimensional deterministic models obtained from
SSMLearn applied to data generated from finite-element models or experimental
data, including experimental videos (see \citet{cenedese22a,cenedese22b,haller22,haller23,kaszas22,axas23b,liu24,xu24,cenedese24,yang24}).
Specific demonstrations of the present results in those data-driven
settings will appear elsewhere.

\appendix

\section{Proof of Theorem 1}

\label{app:A}

Following \citet{li13}, we use the following definition of normal
hyperbolicity for random invariant manifolds: 
\begin{defn}
\label{def: random NHIM}For a fixed $\epsilon\geq0$, a random invariant
manifold $\mathcal{M}\left(\boldsymbol{\nu}\right)$ of system (\ref{eq:random ODE})
is \emph{normally hyperbolic} if for almost every $\boldsymbol{\nu}\in\mathcal{V}$
and $x\in\mathcal{M}\left(\boldsymbol{\nu}\right)$, there exists
a splitting 
\[
T_{x}X=E^{u}\left(x;\boldsymbol{\nu}\right)\oplus E^{c}\left(x;\boldsymbol{\nu}\right)\oplus E^{s}\left(x;\boldsymbol{\nu}\right),\qquad E^{c}\left(x;\boldsymbol{\nu}\right)=T_{x}\mathcal{M}\left(\boldsymbol{\nu}\right),\qquad x\in\mathcal{M}\left(\boldsymbol{\nu}\right),
\]
of each tangent space $T_{x}\mathbb{R}^{n}$ along $\mathcal{M}\left(\boldsymbol{\nu}\right)$
into subspaces $E^{i}\left(x;\boldsymbol{\nu}\right)$, for $i=u,c,s$,
such that

(i) The splitting is $C^{0}$ in $x$ and measurable in $\boldsymbol{\nu}$
and has associated projections $\Pi^{i}\left(x;\boldsymbol{\nu}\right)\colon T_{x}X\to E^{i}\left(x,\boldsymbol{\nu}\right)$,
for $i=u,c,s$, such that

(ii) The splitting is invariant, i.e., 
\begin{align*}
D_{x}\mathbf{F}_{\epsilon}^{t}(x;\boldsymbol{\nu})E^{i}\left(x;\boldsymbol{\nu}\right) & =E^{i}\left(\boldsymbol{\theta}^{t}\left(\boldsymbol{\nu}\right),\mathbf{F}_{\epsilon}^{t}(x;\boldsymbol{\nu})\right),\quad i=u,s.
\end{align*}

(iii) The splitting is hyperbolic, i.e., there exist $(\boldsymbol{\theta}^{t},\mathbf{F}_{\epsilon}^{t})$-invariant
random variables $\bar{\alpha},\bar{\beta}\colon\mathcal{M}\to\left(0,\infty\right)$
with $\bar{\alpha}<\bar{\beta}$ and a tempered random variable $K\colon\mathcal{M}\to[1,\infty)$
(i.e., $\left|K\left(x;\boldsymbol{\theta}^{t}\left(\boldsymbol{\nu}\right)\right)\right|$
has sub-exponential growth as $t\to\pm\infty$ for any $x\in\mathcal{M}\left(\boldsymbol{\nu}\right)$)
such that 
\begin{align*}
\left\Vert D_{x}\mathbf{F}_{\epsilon}^{t}(x;\boldsymbol{\nu})\Pi^{s}\left(x;\boldsymbol{\nu}\right)\right\Vert  & \leq K\left(x;\boldsymbol{\nu}\right)e^{-\bar{\beta}\left(\boldsymbol{\nu},x\right)t},\quad t\geq0,\\
\left\Vert D_{x}\mathbf{F}_{\epsilon}^{t}(x;\boldsymbol{\nu})\Pi^{u}\left(x;\boldsymbol{\nu}\right)\right\Vert  & \leq K\left(x;\boldsymbol{\nu}\right)e^{\bar{\beta}\left(\boldsymbol{\nu},x\right)t},\quad t\leq0,\\
\left\Vert D_{x}\mathbf{F}_{\epsilon}^{t}(x;\boldsymbol{\nu})\Pi^{c}\left(x;\boldsymbol{\nu}\right)\right\Vert  & \leq K\left(x;\boldsymbol{\nu}\right)e^{\bar{\alpha}\left(\boldsymbol{\nu},x\right)\left|t\right|},\quad t\in\mathbb{R}.
\end{align*}
The random manifold $\mathcal{M}$ is called \emph{$\rho$-normally
hyperbolic} if $\rho\bar{\alpha}<\bar{\beta}$. 
\end{defn}
Under our assumptions in Section \ref{subsec:The-unforced-system},
$\mathbf{F}_{0}^{t}\colon X\to X$ is a class $C^{r}$ deterministic
flow map that has a class $C^{r}$, $\rho$-normally hyperbolic, deterministic
invariant manifold $\mathcal{W}_{0}(E)$ with $\rho\leq r$. (In this
case, we have $E^{u}\left(x;\boldsymbol{\nu}\right)=\emptyset$, and
$\bar{\alpha}=\alpha$ and $\bar{\beta}=\beta$ are deterministic
constants in Definition \ref{def: random NHIM}.) If $\mathcal{W}_{0}(E)$
were invariant, then the main result of \citet{li13} applied to this
setting would yield for small enough $\epsilon>0$ that $\mathbf{F}_{\epsilon}^{t}(\,\cdot\,;\boldsymbol{\nu})$
has a unique, class-$C^{r}$, compact normally hyperbolic invariant
manifold $\tilde{\mathcal{M}}$ that is $C^{1}$-close and $C^{r}$-diffeomorphic
to $\mathcal{W}_{0}(E)$ for all $\boldsymbol{\nu}\in\mathcal{V}$.
Furthermore, $\mathcal{\tilde{M}}$ would be normally attracting.
We stress that for a general stochastic differential equations, even
small perturbations can lead to large changes in the flow map over
short time. For that reason, the related random invariant manifold
results of \citet{li13} would only apply because we assumed uniformly
bounded noise.

However, $\mathcal{W}_{0}(E)$ is not an invariant (boundaryless)
manifold but an inflowing-invariant manifold with boundary, as sketched
in Fig. \ref{fig:Geometry of SSM}. While \citet{li13} also prove
related additional results for inflowing-invariant manifolds, those
results require the manifold to be normally repelling. In contrast,
$\mathcal{W}_{0}(E)$ is normally attracting, which makes those additional
results \citet{li13} inapplicable to our setting without further
considerations.

The same issue also arises in the classic persistence theory of normally
hyperbolic invariant manifolds with boundary, which is only applicable
to either normally attracting overflowing-invariant manifolds or normally
repelling inflowing-invariant manifolds. This limitation arises from
the graph transform method used in proving persistence: the advection
of nearby smooth graphs defined over the unperturbed manifolds under
the flow map needs to be a contraction mapping in forward or backward
time on the space of such graphs. Under such advection, however, candidate
graphs over normally attracting \emph{inflowing} invariant manifolds
shrink to surfaces that are no longer graphs over their full initial
domains; the same is true near normally repelling \emph{overflowing}
invariant manifolds in backward time.

To circumvent this issue and still conclude the (nonunique) persistence
of the SSM, $\mathcal{W}_{0}(E)$, of the deterministic part of the
random ODE in (\ref{eq:random ODE}) for $\epsilon>0$ small enough,
we apply the ``wormhole'' construct of \citet{eldering18} to extend
$\mathcal{W}_{0}(E)$ smoothly into a $\rho(E)$-normally hyperbolic,
class $C^{r}$, normally attracting, compact invariant manifold without
boundary. Specifically, we invoke Proposition B1 of \citet{eldering18},
which states that any inflowing-invariant, class $C^{r}$, normally
attracting, $\rho$-normally hyperbolic invariant manifold $\mathcal{M}_{0}$
can be extended smoothly so that it becomes a subset of a $\rho$-normally
hyperbolic, normally attracting, class $C^{r}$ invariant manifold
$\hat{\mathcal{M}}_{0}$ without boundary. In addition, the stable
foliation of $W^{s}\left(\mathcal{M}_{0}\right)$ coincides with that
part of the stable foliation of the extended, boundaryless, normally
attracting invariant manifold $\hat{\mathcal{M}}_{0}$. We note that
this extension is non-unique and hence any invariant manifold result
applied to $\hat{\mathcal{M}}_{0}$ will unavoidably lose uniqueness
when restricted to a statement about $\mathcal{M}_{0}$.

Applying the above random invariant manifold results of \citet{li13}
to a forward- and backward-invariant manifold extension $\hat{\mathcal{M}}_{0}$
of $\mathcal{W}_{0}(E)$, we have Definition \ref{def: random NHIM}
satisfied for $\hat{\mathcal{M}}_{0}$ with $E^{u}\left(x;\boldsymbol{\nu}\right)=\emptyset$,
$K\left(x;\boldsymbol{\nu}\right)\equiv K_{0}$, $\bar{\beta}\left(\boldsymbol{\nu},x\right)\equiv\left|\mathrm{Re}\,\lambda_{d+1}\right|-\nu$
and $\bar{\alpha}\left(\boldsymbol{\nu},x\right)\equiv\left|\mathrm{Re}\,\lambda_{1}\right|-\nu$
for some small $\nu>0$. The cited results of \citet{li13} then yield
a persisting random invariant manifold $\hat{\mathcal{M}}_{\epsilon}\left(\boldsymbol{\nu}\right)$
that is $\mathcal{O}\left(\epsilon\right)$ $C^{1}$-close and $C^{\rho(E)}$-diffeomorphic
to $\hat{\mathcal{M}}_{0}$ . Passing to a subset $\mathcal{W}_{\epsilon}(E;\boldsymbol{\nu})$
of $\hat{\mathcal{M}}_{\epsilon}(\boldsymbol{\nu})$ that is a graph
over $\mathcal{W}_{0}(E)$, we finally conclude the (non-unique) persistence
of a normally attracting, class $C^{\rho(E)}$ random, inflowing-invariant
manifold $\mathcal{W}_{\epsilon}(E;\boldsymbol{\theta}^{t}(\boldsymbol{\nu})$
that is $\mathcal{O}\left(\epsilon\right)$ $C^{1}$-close and $C^{\rho(E)}$-diffeomorphic
to $\mathcal{W}_{0}(E)$, as claimed in Theorem \ref{thm:main}. The
$\mathcal{O}\left(\epsilon\right)$ $C^{1}$-closeness of the forced
SSM to the unforced one can actually be improved to $\mathcal{O}\left(\epsilon\right)$
$C^{\rho(E)}$-closeness by adding $\epsilon$ as a dummy variable
to the variables over which the unforced SSM is defined. We finally
note that despite the non-uniqueness of the SSM, the Taylor expansion
of $\mathcal{W}_{\epsilon}(E;\boldsymbol{\theta}^{t}(\boldsymbol{\nu})$
is unique up to order $C^{\rho(E)}$ in $\epsilon$. 

\section{Spectral density for linear systems\label{app:B} }

Consider the randomly forced linear, time-invariant, multi-degree-of-freedom
system 
\[
\mathbf{M}\ddot{\mathbf{q}}+\mathbf{C}\dot{\mathbf{q}}+\mathbf{K}\mathbf{q}=\mathbf{p}(t).
\]

By taking the Fourier transform of the above equation, we obtain the
energy transfer function in the frequency domain as 
\[
\mathbf{H}(\omega)=[-\omega^{2}\mathbf{M}+i\omega\mathbf{C}+\mathbf{K}]^{-1}.
\]

If we assume the spectral density matrix of the random forcing $\mathbf{p}(t)$
is $\mathbf{\Phi}_{\mathbf{p}}$, we can explicitly calculate the
spectral density matrix for $\mathbf{q}$ as 
\begin{equation}
\mathbf{\Phi}_{\mathbf{q}}(\omega)=\mathbf{H}(\omega)\mathbf{\Phi}_{\mathbf{p}}(\omega)\mathbf{H}^{\star}(\omega).\label{eq:linear_psd}
\end{equation}
When the system is one-dimensional, this relationship simplifies to
$\mathbf{\Phi}_{\mathbf{q}}(\omega)=\|\mathbf{H}(\omega)\|^{2}\mathbf{\Phi}_{\mathbf{g}}(\omega)$.
For the example in Section \ref{sec:Suspension}, we use the above
relation to analytically compute the output response when the system
is assumed to be linear. For the example, in Section \ref{sec:building},
where we apply a reflective boundary for the It\^o process, no analytic
relationship of the type (\ref{eq:linear_psd}) is available for the
linear system, but one can still use this formula as an approximation
for the linear response. This also applies to the example in Section
\ref{sec:beam}, where we sample from a truncated Gaussian distribution.
In this case, although the linear relation is not exact, it serves
as a good estimate to capture the overall trend. Lastly, for the example
in Section \ref{sec:plate}, not even an approximate analytic relationship
is available for the spectrum of the linear response, which prompts
us to use Monte Carlo simulations to obtain the linear response.

\bibliographystyle{plainnat}
\bibliography{random_SSM_bib_final}

\end{document}